\documentclass[12pt]{article}

\usepackage[english]{babel}
\usepackage{geometry}
\usepackage{pdfpages}
\usepackage{amsmath}
\usepackage{amssymb}
\usepackage{amsthm}
\usepackage{graphicx}
\usepackage{float}
\usepackage{array}
\usepackage{xcolor}
\usepackage{listings}

\definecolor{keywordcolor}{rgb}{0.7,0.1,0.1}
\definecolor{commentcolor}{rgb}{0.4,0.4,0.4}
\definecolor{symbolcolor}{rgb}{0.0,0.1,0.6}
\definecolor{sortcolor}{rgb}{0.1,0.5,0.1}

\usepackage{lstlean}

\lstdefinestyle{LeanCode}{
  language=lean,
  basicstyle=\ttfamily\fontsize{8pt}{8pt}\selectfont,
  keywordstyle=[1]{\color{keywordcolor}},
  keywordstyle=[2]{\color{sortcolor}},
  commentstyle=\itshape\color{commentcolor},
  stringstyle=\ttfamily,
  breaklines=true,
  breakatwhitespace=false,
  columns=fullflexible,
  keepspaces=true,
  showstringspaces=false,
  tabsize=2,
  extendedchars=false,
  captionpos=b
}

\usepackage{newunicodechar}
\newunicodechar{⦃}{\ensuremath{\{\!\{}}
\newunicodechar{⦄}{\ensuremath{\}\!\}}}
\newunicodechar{⋂}{\ensuremath{\bigcap}}
\newunicodechar{ₗ}{\ensuremath{_{l}}}
\newunicodechar{ᵢ}{\ensuremath{_{i}}}
\newunicodechar{ℝ}{\ensuremath{\mathbb{R}}}
\newunicodechar{ₜ}{\ensuremath{_{t}}}

\newtheorem{theorem}{Theorem}

\theoremstyle{definition}
\newtheorem{definition}{Definition}[section]

\theoremstyle{lemma}
\newtheorem{lemma}[theorem]{Lemma}

\theoremstyle{remark}

\theoremstyle{boldremark}

\title{Proof and More Variations of Bellman’s Lost-in-a-forest Problem}
\author{Zhipeng Deng}
\date{} 

\begin{document}
\maketitle

\begin{abstract}
In this paper, based on our previous general formulation and computational solution to Bellman’s Lost-in-a-forest Problem, we provide the proof of general solution and obtained more variations and results related to this problem. This paper provides generalized formalized method connecting curve covering, lost-in-the-forest problem, and traveling salesman problem with neighborhoods. We prove the equivalence and convergence. We also provide more results of searching for two lines, connection to Wetzel’s unit arc covering problem, variations with closed path, variations in three dimensions, etc. The results include general calculation equations, partial analytical results, and numerical results.
\end{abstract}

\noindent \textbf{Keywords:} 
Bellman’s lost-in-a-forest problem, Traveling salesman problem with neighborhoods, Hamiltonian path problem, Moser's worm problem, Universal cover, Discrete geometry.
\\

\noindent \textbf{Classification}

Optimization and Control (math.OC)

Metric Geometry (math.MG)

Discrete Mathematics (cs.DM)

Computational Geometry (cs.CG)

49K30 (Optimal Solutions in Calculus of Variations)

49Q10 (Optimization of Shapes Other Than Minimal Surfaces)

52A40 (Inequalities and Extremum Problems)

\tableofcontents

\section{Introduction}

Bellman's lost-in-a-forest problem is a difficult unsolved minimization problem in geometry, introduced by Richard E. Bellman \cite{Gross1955} \cite{Croft2012} \cite{Finch2004}. It is commonly stated as follows:: 
\\

``A hiker is lost in a forest whose shape and dimensions are precisely known. What is the best path for the hiker to follow to escape the forest?"
\\

It is typically assumed that the hiker does not know either the starting location or the initial facing orientation. The optimal path is defined as the one that minimizes the worst-case distance that must be traveled before reaching the forest boundary. Proven solutions are known for only a limited number of forest shapes, including the straight line \cite{Gross1955} \cite{Gluss1961} \cite{Isbell1957} \cite{Joris1980} and the unit strip \cite{Zalgaller2005}.

In our previous paper \cite{Deng2024}, we proposed a general computational solution for Bellman’s Lost-in-a-forest Problem. The framework converted this problem to traveling salesman problem with neighborhoods (TSPN). The previous paper \cite{Deng2024} included some assumptions and results may not reach the optimal solution.

The contributions and primary content of this paper consist of proving the equivalence, deriving additional results using previously established general solution, comparing them with other studies, and extending other more variations.

\section{General formalized method connecting curve covering, Bellman's Lost-in-the-forest problem, and TSPN}

The literature \cite{Finch2004} has already established a connection between Bellman’s Lost-in-a-Forest Problem and the Moser’s Worm Problem: "the shortest escape path from a closed, convex region is the shortest path that does not fit in the interior of the region". These two problems exhibit a dual relationship. 

Traditionally, Bellman’s Lost-in-a-forest Problem is defined in minimax way \cite{Croft2012} \cite{Finch2004}\cite{Gluss1961}. Let the search space be $\mathbb{R}^2$. The forest boundary is $B$. The target boundary is characterized by parameters $\xi \in \Xi $. The goal is to find a path $C$ such that for every possible target $B(\xi)$, the path intersects the boundary. The formulation is

\begin{equation}
\mathop{\text{minimize}}\limits_{C} \mathop{\text{sup}}\limits_{\xi \in \Xi} L\left ( C, \xi  \right ) 
\end{equation}

where $L\left ( C, \xi  \right )$ is the length required to reach target boundary $B(\xi)$.

According to the general solution described in our paper \cite{Deng2024}, and based on Convex Analysis, specifically the properties of Convex Hulls and Geometric Duality. By realizing that "intersecting all forest boundaries" is geometrically identical to "drawing a curve whose convex hull contains the shape," the problem sheds its adversarial $\sup\inf$ nature. The worst-case scenario is structurally baked into the constraints of TSPN.

In our previous paper \cite{Deng2024}, we converted the problem to TSPN, and provided two weak forms of Bellman’s Lost-in-a-forest Problem. Weak form I is starting from a known point with unknown orientations. Weak form II is starting from several possible known points with unknown orientation. Discrete solution to Weak Form I can be written as

\begin{definition}[Permutation formulation]
Let $S_N$ denote the symmetric group on $\{0,1,\dots,N-1\}$.
For each permutation $\pi\in S_N$, define the path-length functional
\[
\mathcal L_N(\pi,p)
:=
\|p_{\pi(0)}-O\|
+
\sum_{i=1}^{N-1}\|p_{\pi(i)}-p_{\pi(i-1)}\|,
\]
where $O$ is (0,0).
\end{definition}

\begin{definition}[Weak Form I]
The discrete weak problem with one starting point and $N$ orientations is
\begin{equation}
\mathcal J_N^{(1)}
:=
\mathop{\text{minimize}}\limits_{\pi\in S_N, p_i\in \partial\Omega_i}\mathcal L_N(\pi,p)
\end{equation}
subject to the permutation constraints, where $\pi$ is induced by $S_N$.
\end{definition}

Similarly, discrete solution to Weak Form II can be written as
\begin{definition}[Two-parameter discrete family of transformed boundaries]
For each pair $(k,i)\in\{1,\dots,M\}\times\{0,\dots,N-1\}$, define
\[
\partial\Omega_{k,i} := \partial\Omega_{s_k,\theta_i},
\qquad
\theta_i:=\frac{2\pi i}{N}.
\]
Choose points
\[
p_{k,i}\in \partial\Omega_{k,i}.
\]
The total number of discrete escape points is $MN$.
\end{definition}

\begin{definition}[Weak Form II]
The formulation is
\begin{equation}
\mathcal J_{M,N}^{(2)}
:=
\mathop{\text{minimize}}\limits_{\pi\in S_{MN}, p_{k,i}\in\partial\Omega_{k,i}}
\|p_{\pi(1)}-O\|
+\sum_{h=2}^{MN}\|p_{\pi(h)}-p_{\pi(h-1)}\|
\end{equation}
\end{definition}

Based on these general computational solutions, this paper will provide proof and more results in following sections.

\section{Proof of general computational framework}
In this section, we provide the proof of general computational framework, including discretized Bellman's Lost-in-a-forest Problem is reducible to TSPN, and solution convergence of discretized to continuous problem. 

The proof framework consists of four steps: (1) Bellman’s Lost-in-a-forest Problem is equivalent to transformed boundary intersection (Corresponds to the proof of concept in our paper \cite{Deng2024}). (2) Existence of optimal path for Bellman’s Lost-in-a-forest Problem (Previous study \cite{Finch2004} has discussed existence in terms of the diameter of bounded forest boundary). (3) Discretized Bellman’s Lost-in-a-forest Problem is reducible to TSPN (Although NP-hard, global optimization algorithms exist, and MIP solvers can find and guarantee a global optimal). (4) Global optimal of discretized version TSPN converges to the global optimal of original continuous Bellman’s Lost-in-a-forest Problem (Similar to extreme value theorem, minimum can be found on the discrete grid, then as the grid is refined, the global optimum is obtained, thereby solving the original problem).

(1). We first provide a precise continuous formulation for Bellman's Lost-in-a-forest Problem as rigid-motion boundary intersection problem

Assumptions: Let the forest boundary be a nonempty compact closed set $B\subset \mathbb{R}^2$. Let the starting region be a nonempty compact set $R\subset \mathbb{R}^2$. Orientation set is $\Theta=[0,2\pi]$. Let $G:=R \times \Theta$ be the compact parameter space of unknown initial starting location and orientation.

Let $\mathcal{C}$ be the class of escape paths $f \colon [0,1] \to \mathbb{R}^2$ which is absolutely continuous in a canonical coordinate system with $f(0)=0$, and length $\mathcal{L}(f) := \int_0^1 \| f'(t) \| \, dt < \infty$. The actual physical path is obtained by rigid motion $p(t) = s + \mathrm{Rot}(\alpha) f(t)$, where $s \in R$ and $\alpha \in \Theta$

For rigid-motion transformation, for each $g = (s, \alpha) \in G$, define the rigid motion $T_g(x) := \operatorname{Rot}(-\alpha)(x - s)$ including translation and rotation, and define the transformed boundary $B_g := T_g(B)$. The rotation matrix is
\[
\operatorname{Rot}=
\begin{pmatrix}
\cos\alpha & -\sin\alpha\\
\sin\alpha & \cos\alpha
\end{pmatrix}
\]

Feasibility as boundary intersection constraint is to define the distance from a curve to a set:

\[
d(f, B_g) := \inf_{t \in [0,1]} \operatorname{dist}(f(t), B_g), \quad \operatorname{dist}(x, S) := \inf_{y \in S} \| x - y \|.
\]

\noindent Then ``$f$ intersects $B_g$'' is exactly $d(f, B_g) = 0$.

Thus, continuous Bellman's Lost-in-a-forest Problem can be written as
\[
 L^\star := \inf_{f \in \mathcal{C}} \{ \mathcal{L}(f) : d(f, B_g) = 0,\forall g \in G \} \tag{P}
\]

(2). Then we provide equivalence theorem - original Bellman's Lost-in-a-forest Problem is equivalent to transformed boundary intersection problem.

\begin{theorem}[Equivalence of formulation] Assume the hiker's search path is $f$ starting at origin with fixed orientation. Then
\[
p([0,1]) \cap B \neq \emptyset \iff f([0,1]) \cap B_{(s,\alpha)} \neq \emptyset.
\]

Consequently, the original minimax Bellman's Lost-in-a-forest Problem is equivalent to transformed boundary intersection problem.
\end{theorem}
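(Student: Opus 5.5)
The plan is to show that $T_g$ is a bijective isometry of $\mathbb{R}^2$ that carries the physical path onto the canonical path. From this, intersection with $B$ transfers directly to intersection with $B_g$. Fix $g=(s,\alpha)\in G$ and write $p(t)=s+\mathrm{Rot}(\alpha)f(t)$.

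\textbf{Step 1: $T_g$ inverts the rigid motion.} We compute
\[
T_g(p(t))=\mathrm{Rot}(-\alpha)\bigl(s+\mathrm{Rot}(\alpha)f(t)-s\bigr)=\mathrm{Rot}(-\alpha)\mathrm{Rot}(\alpha)f(t)=f(t).
\]
This uses $\mathrm{Rot}(-\alpha)=\mathrm{Rot}(\alpha)^{-1}=\mathrm{Rot}(\alpha)^{T}$, which is immediate from the explicit matrix. Hence $T_g(p([0,1]))=f([0,1])$. The map $T_g$ is a bijection of $\mathbb{R}^2$ with inverse $x\mapsto s+\mathrm{Rot}(\alpha)x$. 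Because a bijection preserves intersections of sets, $T_g(p([0,1])\cap B)=f([0,1])\cap B_g$. One side is empty exactly when the other is, which proves the displayed equivalence.

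\textbf{Step 2: the metric version.} We also want the equivalence in the form $d(f,B_g)=0$ used in (P). Since $\mathrm{Rot}(-\alpha)$ is orthogonal, $T_g$ is an isometry, so $\mathrm{dist}(f(t),B_g)=\mathrm{dist}(p(t),B)$. The remaining point is that a zero infimum is actually attained. The set $B$ is compact, hence $B_g$ is compact, and $f$ is continuous on the compact interval $[0,1]$. Therefore $t\mapsto \mathrm{dist}(f(t),B_g)$ is continuous and attains its minimum at some $t_0$. If $d(f,B_g)=0$, then $\mathrm{dist}(f(t_0),B_g)=0$, and closedness of $B_g$ gives $f(t_0)\in B_g$. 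The converse direction is trivial. So $d(f,B_g)=0$ holds if and only if $f([0,1])\cap B_g\neq\emptyset$.

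\textbf{Step 3: the ``consequently'' clause.} Write the original minimax problem with $L(f,g)=\inf\{\mathcal{L}(f|_{[0,t]}) : p(t)\in B\}$, and note that arc length is invariant under $T_g$. The value $\sup_g L(f,g)$ is finite and at most $\ell$ exactly when the truncated path $f|_{[0,\tau]}$ of length $\ell$ meets every $B_g$. Reparametrizing this truncation to $[0,1]$ gives a feasible curve for (P) of length $\ell$. Conversely, a curve feasible for (P) has worst-case escape length at most $\mathcal{L}(f)$. Taking infima on both sides shows that the two optimal values coincide. This step is the main obstacle, because it requires care on three points:
\begin{itemize}
\item the escape time should be a minimum, not merely an infimum;
\item the truncation $\tau$ should be chosen as the supremum over $g$ of the first hitting times, and this needs compactness of $G$ together with lower semicontinuity of the hitting time in $g$;
\item the infimum in (P) should not be attained only in a degenerate way.
\end{itemize}
To handle the first two points I would define $\tau_g=\min\{t: f(t)\in B_g\}$, which exists by the closedness argument of Step 2, and bound $\sup_g \mathcal{L}(f|_{[0,\tau_g]})$ directly by $\mathcal{L}(f)$. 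That bound suffices for equality of the values without needing continuity of $g\mapsto\tau_g$.
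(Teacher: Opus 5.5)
Your Step 1 is exactly the paper's proof. The paper's whole argument is the pointwise chain $p(t)\in B\iff s+\mathrm{Rot}(\alpha)f(t)\in B\iff f(t)\in\mathrm{Rot}(-\alpha)(B-s)=B_{(s,\alpha)}$, which is your observation that the bijection $T_g$ carries $p([0,1])\cap B$ onto $f([0,1])\cap B_g$.

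Everything after that goes beyond what the paper writes. The paper only asserts in the text that ``$f$ intersects $B_g$'' is the same as $d(f,B_g)=0$; its Lean appendix proves this with the same attained-minimum argument as your Step 2. It disposes of the ``consequently'' clause with a remark that $f$ guarantees escape iff it meets every $B_g$, i.e.\ an equivalence of feasibility conditions only. Your Step 3 proves the stronger and more accurate statement $\inf_f\sup_g L(f,g)=L^\star$. That genuinely needs your truncation argument, because the minimax objective does not charge for the part of the path after the worst first escape, while (P) charges the full length. The argument in the first half of your Step 3 is correct.

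Your list of ``obstacles'' is more worried than necessary, and it slightly misplaces where each inequality comes from. Bounding $\sup_g\mathcal{L}(f|_{[0,\tau_g]})$ by $\mathcal{L}(f)$ only yields $\inf_f\sup_g L(f,g)\le L^\star$. The reverse inequality is the truncation step, and it needs neither compactness of $G$ nor any semicontinuity of $g\mapsto\tau_g$:
\begin{itemize}
\item put $\Lambda(t):=\mathcal{L}(f|_{[0,t]})$, which is continuous and nondecreasing for absolutely continuous $f$, and $\tau:=\sup_g\tau_g$;
\item each $\tau_g\le\tau$, so $f|_{[0,\tau]}$ meets every $B_g$;
\item $\Lambda(\tau)=\sup_g\Lambda(\tau_g)=\sup_g L(f,g)$ by continuity, whether or not the supremum over $g$ is attained.
\end{itemize}
Your third bullet is not needed, since only equality of infima is asserted. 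Even attainment of the individual $\tau_g$ could be dropped: truncate at length $\ell+\varepsilon$ and let $\varepsilon\to 0$.
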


\begin{proof}

$$p(t) \in B \iff s + \mathrm{Rot}(\alpha)f(t) \in B \iff f(t) \in \mathrm{Rot}(-\alpha)(B - s) = B_{(s, \alpha)}.$$
\end{proof}

Thus, intersection of $p$ with $B$ is equivalent to intersection of $f$ with the transformed boundary $B_g$ for the corresponding unknown $g = (s, \alpha)$. Under the rigid-motion transformation $B_g=T_g (B),g=(s,\alpha)\in G$, a canonical path $f$ guarantees escape if and only if 
$f([0,1]) \cap B_{(s,\alpha)} \neq \emptyset \forall  g \in G$. Therefore, Bellman's Lost-in-a-Forest Problem is equivalent to the constrained optimization problem.

(3). Now we provide existence of optimal solution to Bellman's Lost-in-a-forest Problem. Although in the literature \cite{Finch2004}, the authors proved the existence of optimal path for bounded forest boundary that ``If the forest is bounded, a line segment whose length is the diameter is surely an escape path''. In this paper, we did not have bounded forest boundary assumption.

\begin{theorem}[Existence of minimizer] Assume $B$ is compact nonempty and $R$ compact nonempty. If there exists at least one feasible curve $f \in \mathcal{C}$ for (P) with finite length, then (P) admits a minimizer $f^* \in \mathcal{C}$.
\end{theorem}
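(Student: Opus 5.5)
The proof follows the direct method of the calculus of variations: compactness via Arzel\`a--Ascoli, lower semicontinuity of length, and closedness of the feasibility constraint under uniform convergence. Since a feasible curve $f_0$ with $\mathcal{L}(f_0)<\infty$ exists, we have $L^\star\le \mathcal{L}(f_0)<\infty$. We take a minimizing sequence $f_n\in\mathcal{C}$ of feasible curves with $\mathcal{L}(f_n)\downarrow L^\star$, and may assume $\mathcal{L}(f_n)\le L^\star+1=:\Lambda$. Length and feasibility are invariant under reparametrization, so we replace each $f_n$ by its constant-speed reparametrization on $[0,1]$. This keeps absolute continuity and $f_n(0)=0$. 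Each $f_n$ is then $\mathcal{L}(f_n)$-Lipschitz, hence $\Lambda$-Lipschitz, and it takes values in the closed ball $\overline{B}(0,\Lambda)$. (Degenerate curves of zero length are handled trivially, as constant maps.)

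The family $\{f_n\}$ is therefore equicontinuous and uniformly bounded. By Arzel\`a--Ascoli a subsequence converges uniformly to some $f^*:[0,1]\to\mathbb{R}^2$, which is $\Lambda$-Lipschitz, hence absolutely continuous, with $f^*(0)=0$. For the length we use the variational characterization $\mathcal{L}(f)=\sup_{\text{partitions}}\sum_j\|f(t_j)-f(t_{j-1})\|$, valid for absolutely continuous curves. Each polygonal sum is continuous under pointwise convergence, so $\mathcal{L}$ is lower semicontinuous, giving $\mathcal{L}(f^*)\le\liminf\mathcal{L}(f_n)=L^\star$. Thus $f^*\in\mathcal{C}$, and once feasibility is shown, $\mathcal{L}(f^*)=L^\star$.

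The main step is to show that feasibility passes to the limit. Fix $g=(s,\alpha)\in G$. The set $B_g=T_g(B)$ is compact, being an isometric image of the compact set $B$. For each $n$ feasibility gives $d(f_n,B_g)=0$. Since $f_n([0,1])$ and $B_g$ are both compact, the infimum is attained, so there exist $t_n\in[0,1]$ and $y_n\in B_g$ with $f_n(t_n)=y_n$. Passing to a further subsequence (depending on $g$, which is harmless because $g$ is fixed), we get $t_n\to t^*$ and $y_n\to y^*\in B_g$ by compactness. Uniform convergence together with continuity of $f^*$ then gives $f_n(t_n)\to f^*(t^*)$, so $f^*(t^*)=y^*\in B_g$. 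Hence $d(f^*,B_g)=0$ for every $g\in G$, and $f^*$ is feasible.

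The delicate points are mostly bookkeeping. First, we must check that the reparametrization preserves membership in $\mathcal{C}$. Second, we must use uniform convergence rather than pointwise convergence in the diagonal step $f_n(t_n)\to f^*(t^*)$. Third, compactness of $B$ is what guarantees that $d=0$ means an actual intersection. Compactness of $R$ is not really needed here beyond ensuring that $G$ is a well-defined parameter set, since the argument is pointwise in $g$.
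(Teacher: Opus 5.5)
Your proof is correct and follows the same direct-method structure as the paper: a minimizing sequence, constant-speed reparametrization to get a uniform Lipschitz bound, Arzel\`a--Ascoli, closedness of each hitting constraint under uniform convergence, and lower semicontinuity of length. The differences are only local: you get lower semicontinuity from the supremum-over-partitions formula, which is more elementary than the paper's weak $W^{1,1}$ compactness argument, and you pass feasibility to the limit by extracting convergent hitting times $t_n\to t^*$ rather than through uniform convergence of the $1$-Lipschitz distance functions $\phi_g^{(n)}\to\phi_g$ (closedness of $B_g$, not compactness, already suffices for your step).
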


\begin{proof}

Let $(f_n) \subset \mathcal{C}$ be a minimizing sequence: $\mathcal{L}(f_n) \downarrow L^\star$. Then $\sup_n \mathcal{L}(f_n) < \infty$. Reparametrize each $f_n$ by length onto $[0, 1]$, so that each $f_n$ becomes Lipschitz with a uniform Lipschitz constant $K := \sup_n \mathcal{L}(f_n)$. Hence $\{f_n\}$ is equicontinuous and uniformly bounded on $[0, 1]$.

By Arzel\`a--Ascoli theorem, there exists a uniformly convergent subsequence $f_{n} \to f$ in $C([0,1]; \mathbb{R}^2)$.

For feasibility passes to the limit, for any fixed $g \in G$, define $\phi_g(t) := \operatorname{dist}(f(t), B_g)$, continuous in $t$ since $B_g$ is closed. Uniform convergence implies $\phi_g^{(n)}(t) := \operatorname{dist}(f_{n}(t), B_g)$ converges uniformly to $\phi_g(t)$. 

Since the distance function to a closed set is 1-Lipschitz,
$ |\phi_g^{({n})}(t) - \phi_g(t)| \le \|f_{{n}}(t) - f(t)\|. $ Uniform convergence $f_{{n}} \to f$ therefore implies uniform convergence $ \phi_g^{({{n}})} \to \phi_g. $ Consequently,
$$ d(f_{n}, B_g) = \inf_{t \in [0,1]} \phi_g^{({n})}(t) \longrightarrow \inf_{t \in [0,1]} \phi_g(t) = d(f, B_g). $$

Since $d(f_{n}, B_g) = 0$ for every $n$, it follows that
$ d(f, B_g) = 0. $ This holds for every $g \in G$. Thus $f$ is feasible.

As for lower semicontinuity of length, since the sequence $\{f_n\}$ is uniformly Lipschitz, it is bounded in $W^{1,\infty}(0, 1; \mathbb{R}^2)$. By compactness, there exists a subsequence such that
$ f_n \rightharpoonup f \text{in } W^{1,1}(0, 1; \mathbb{R}^2). $ The arc-length functional $ \mathcal{L}(f) = \int_0^1 |f'(t)| dt $ is lower semicontinuous under weak convergence, hence
$$ \mathcal{L}(f) \le \liminf_{n\to\infty} \mathcal{L}(f_n)= L^\star. $$

Therefore  $\mathcal{L}(f) \le L^\star. $ Since $f$ is feasible, it follows that
 $\mathcal{L}(f) = L^\star $ and $f$ is a minimizer.
\end{proof}

(4). We now prove discretization in $(s, a)$ yields TSPN

For nested discretizations, let $\{G_m\}_{m \ge 1}$ be an increasing nested sequence of finite subsets

$$ G_1 \subset G_2 \subset \cdots \subset G, \bigcup_{m \ge 1} G_m = G, $$

Define the discretized feasible set: $ \mathcal{F}_m := \{f \in \mathcal{C}: d(f, B_g) = 0,\forall g \in G_m\}. $

Define discretized optimum value:

\[L_m^\star := \inf_{f \in \mathcal{F}_m} \mathcal{L}(f). \tag{$\text{P}_m$}\]

\begin{lemma}[Monotonicity and lower bounds] $L_m^\star$ is nondecreasing in $m$, and $L_m^\star \le L^\star$ for all $m$.
\end{lemma}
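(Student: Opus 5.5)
The plan is a direct set-inclusion argument; no analysis beyond comparing feasible sets is needed. Recall $\mathcal{F}_m = \{f \in \mathcal{C} : d(f,B_g)=0,\ \forall g\in G_m\}$. Denote by $\mathcal{F} := \{f\in\mathcal{C} : d(f,B_g)=0,\ \forall g\in G\}$ the feasible set of (P).

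The first step is to show that the feasible sets are nested in reverse order. Since $G_m \subset G_{m+1}$, any $f \in \mathcal{F}_{m+1}$ satisfies $d(f,B_g)=0$ for every $g\in G_{m+1}$, and in particular for every $g \in G_m$. Hence $\mathcal{F}_{m+1}\subset\mathcal{F}_m$. Likewise $G_m\subset G$ gives $\mathcal{F}\subset\mathcal{F}_m$ for every $m$.

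The second step uses the elementary fact that an infimum over a smaller set is at least the infimum over a larger set: if $A\subset A'$, then $\inf_{A'}\mathcal{L} \le \inf_A \mathcal{L}$. We adopt the convention $\inf\emptyset=+\infty$, so the case where (P) has no feasible curve is covered and $L^\star=+\infty$ makes the bound trivial. Applying this to $\mathcal{F}_{m+1}\subset\mathcal{F}_m$ gives $L_m^\star\le L_{m+1}^\star$, which is the monotonicity claim. Applying it to $\mathcal{F}\subset\mathcal{F}_m$ gives $L_m^\star\le L^\star$. One may also note that $L_m^\star\ge 0$, so the sequence is a bounded nondecreasing sequence in $[0,L^\star]$ whenever (P) is feasible. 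This will be useful in the subsequent convergence step, though it is not required here.

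There is no genuine obstacle. The only point requiring care is bookkeeping: the constraint sets grow with $m$, while the feasible sets shrink. One should also state the $\inf\emptyset=+\infty$ convention explicitly, so that the inequalities remain meaningful when some $\mathcal{F}_m$ or $\mathcal{F}$ is empty.
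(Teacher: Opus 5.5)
Your proof is correct and matches the paper's argument: $G_m\subset G_{m+1}\subset G$ gives $\mathcal{F}\subset\mathcal{F}_{m+1}\subset\mathcal{F}_m$, and an infimum over a smaller set cannot be smaller. You also define $\mathcal{F}$ explicitly and adopt the convention $\inf\emptyset=+\infty$; the paper uses both implicitly, so stating them is a small gain in rigor.
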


\begin{proof}
Since $G_m \subset G_{m+1}$, $\mathcal{F}_{m+1} \subset \mathcal{F}_m$. Minimizing over a smaller feasible set cannot decrease the infimum, hence $L_m^\star \le L_{m+1}^\star$. Also $\mathcal{F} \subset \mathcal{F}_m$, so $L_m^\star \le L^\star$. 
\end{proof}

For any prescribed visiting order $ p_{\pi(1)}, \dots, p_{\pi(H_m)}, $ the shortest rectifiable curve connecting these points is the corresponding polyline $ 0 \to p_{\pi(1)} \to \dots \to p_{\pi(H_m)}$. After fixing the selected neighborhood points and their visiting order, the continuous optimization problem reduces exactly to minimizing the length of a polyline. For each $m$, write $G_m = \{g_1, \dots, g_{H_m}\}$ and denote neighborhoods $S_h := B_{g_h}$ closed sets.

Define the TSPN objective for an ordered selection of points $p_h \in S_h$, an order of $\{1, \dots, H_m\}$. Consider the polyline $0 \rightarrow p_{\pi(1)} \rightarrow \cdots \rightarrow p_{\pi(H_m)}$, the path length is

\[
\ell(\pi, p) := \| p_{\pi(1)} \| + \sum_{k=2}^{H_m} \| p_{\pi(k)} - p_{\pi(k-1)} \|.
\]

\begin{lemma}[Polyline optimality for fixed visit points and order] Among all rectifiable curves that start at 0 and visit $p_{\pi(1)}, \dots, p_{\pi(H_m)}$ in that order, the minimal length is exactly $\ell(\pi, p)$, achieved by the polyline connecting successive points.
\end{lemma}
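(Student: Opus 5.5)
The plan is to prove two inequalities: every admissible curve has length at least $\ell(\pi,p)$, and the polyline attains it.

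\textbf{Lower bound.} Let $f\in\mathcal C$ be a rectifiable curve with $f(0)=0$ that visits the points in the prescribed order. Then there are parameters $0=t_0\le t_1\le\cdots\le t_{H_m}\le 1$ with $f(t_k)=p_{\pi(k)}$ for $k\ge 1$. Set $q_0:=0$ and $q_k:=p_{\pi(k)}$. The key estimate is the chord inequality on each subinterval. Since $f$ is absolutely continuous,
\[
\|q_k-q_{k-1}\|=\Bigl\|\int_{t_{k-1}}^{t_k} f'(t)\,dt\Bigr\|\le \int_{t_{k-1}}^{t_k}\|f'(t)\|\,dt .
\]
If one prefers to work with merely rectifiable curves, the same inequality follows from the definition of length as the supremum over partitions, using the single partition $\{t_{k-1},t_k\}$. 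The subintervals $[t_{k-1},t_k]$ have disjoint interiors and are contained in $[0,1]$. Summing over $k=1,\dots,H_m$ and using additivity of $\int\|f'\|$ together with nonnegativity of the integrand gives
\[
\mathcal L(f)\ \ge\ \sum_{k=1}^{H_m}\|q_k-q_{k-1}\| \;=\; \ell(\pi,p).
\]

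\textbf{Attainment.} Define the polyline $f_\star$ by splitting $[0,1]$ into consecutive subintervals $I_k=[\tau_{k-1},\tau_k]$ and letting $f_\star$ be the affine interpolation from $q_{k-1}$ to $q_k$ on $I_k$. The subintervals can be chosen of equal length $1/H_m$, or proportional to the segment lengths. Then $f_\star$ is Lipschitz, hence absolutely continuous, and satisfies $f_\star(0)=0$ and $f_\star(\tau_k)=q_k$, so it lies in $\mathcal C$ and visits the points in order. On $I_k$ the derivative is constant with $\|f_\star'\|\,|I_k|=\|q_k-q_{k-1}\|$, so $\mathcal L(f_\star)=\ell(\pi,p)$. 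Combined with the lower bound, this shows the minimum exists and equals $\ell(\pi,p)$.

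\textbf{Main obstacle.} The only delicate point is making ``visits in that order'' precise as the existence of nondecreasing parameters $t_k$. The issues are possible repeated points, $t_k=t_{k-1}$, and points that the curve passes through several times. I would handle these by choosing any admissible nondecreasing selection, which the definition of ordered visiting provides. A degenerate subinterval contributes $0$ on both sides of the chord inequality, so the argument is unaffected.
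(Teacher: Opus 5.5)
Your proof is correct and follows the same route as the paper. The paper applies the chord inequality on each sub-arc between successive visit times and sums the pieces to get $\mathcal{L}(f)\ge \ell(\pi,p)$, and you do the same, while also checking attainment with an explicit piecewise-affine parametrization that the paper leaves implicit; there, use the equal-length subintervals, since the proportional choice is undefined when all segments have length zero.
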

\begin{proof}

For any such curve, the sub-arc between successive visit times has length at least the Euclidean distance between its endpoints. These lower bounds yields $\mathcal{L}(f) \geq \ell(\pi, p)$. 
\end{proof}
\begin{theorem}[Discretized Bellman's Lost-in-a-forest Problem equals TSPN] For each $m$, the value $L_m^\star$ equals the optimal value of TSPN:

\[
L_m^\star = \mathop{\text{minimize}}\limits_{\pi \in S_{H_m},p_h \in S_h}  \| p_{\pi(1)} \| + \sum_{k=2}^{H_m} \| p_{\pi(k)} - p_{\pi(k-1)} \|   \tag{$\text{TSPN}_m$}
\]
\end{theorem}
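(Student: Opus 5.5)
We prove the equality by establishing two inequalities between $L_m^\star$ and the TSPN value, which we denote $T_m$. We also need to check that the minimum in $(\text{TSPN}_m)$ is attained, so that writing ``minimize'' is justified.

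\emph{Step 1: $L_m^\star \le T_m$.} Fix any $\pi\in S_{H_m}$ and any $p_h\in S_h$. Let $f$ be the polyline $0\to p_{\pi(1)}\to\cdots\to p_{\pi(H_m)}$, parametrized on $[0,1]$. It is piecewise affine, hence absolutely continuous, and $f(0)=0$. Since $f$ passes through $p_h\in S_h=B_{g_h}$ for every $h$, we have $d(f,B_{g_h})=0$ for all $g_h\in G_m$, so $f\in\mathcal F_m$. Therefore $L_m^\star\le \mathcal L(f)=\ell(\pi,p)$. Taking the infimum over $(\pi,p)$ gives $L_m^\star\le T_m$.

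\emph{Step 2: $T_m\le L_m^\star$.} Take any $f\in\mathcal F_m$. For each $h$, $d(f,B_{g_h})=0$. Because $f([0,1])$ is compact and $B_{g_h}$ is closed (a rigid image of the compact set $B$), the infimum $\inf_t \operatorname{dist}(f(t),B_{g_h})$ is attained and equals zero. Hence there is a time $t_h$ with $p_h:=f(t_h)\in S_h$. This is the point that needs care: we use compactness of $B_g$, otherwise distance zero would not imply actual contact. Sort the times $t_h$ increasingly, breaking ties arbitrarily; this defines a permutation $\pi$ with $t_{\pi(1)}\le\cdots\le t_{\pi(H_m)}$. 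Then $f$ starts at $0$ and visits $p_{\pi(1)},\dots,p_{\pi(H_m)}$ in that order. Coinciding times only produce zero-length segments, which is harmless. By the Polyline optimality lemma, $\mathcal L(f)\ge \ell(\pi,p)\ge T_m$. Taking the infimum over $f\in\mathcal F_m$ gives $L_m^\star\ge T_m$.

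\emph{Step 3: attainment.} For fixed $\pi$, the map $p\mapsto\ell(\pi,p)$ is continuous on $\prod_h S_h$. We may restrict to a compact subset of this product: each $S_h$ is compact because $B$ is compact and $T_{g_h}$ is an isometry. So the minimum over $p$ exists for each $\pi$, and there are finitely many $\pi$. Hence $T_m$ is a true minimum. Combined with Step 1, this also shows that $L_m^\star$ is attained by a polyline.

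The main obstacle is Step 2, which converts the abstract feasibility condition $d(f,B_g)=0$ into concrete visit points with a consistent ordering. We handle it with compactness and by sorting the hitting times. The remaining steps are direct.
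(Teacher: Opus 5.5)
Your proof is correct and follows the paper's argument. The polyline through any TSPN selection is feasible for $(\text{P}_m)$. Conversely, sorting the hitting times of a feasible $f$ and applying the polyline-optimality lemma gives $\mathcal{L}(f)\ge\ell(\pi,p)$. Your two extra steps fill in details the paper leaves implicit: extracting actual hitting times from $d(f,B_g)=0$, which needs only $B_g$ closed and $[0,1]$ compact, not compactness of $B_g$; and attainment of the TSPN minimum via compactness of $\prod_h S_h$.
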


\begin{proof}

Given any selection $(\pi, p)$, let $ p_h \in S_h, h = 1, \dots, H_m, $ and let $\pi \in S_{H_m}$. The polyline visits every neighborhood $S_h$ and is therefore feasible for $(P_m)$.

Conversely, let $f \in \mathcal{F}_m$. For each neighborhood $S_h$, pick a $t_h$ such that $f(t_h) \in S_h$. Sort these: $t_{\pi(1)} \le \dots \le t_{\pi(H_m)}$. Define $p_{\pi(k)} := f(t_{\pi(k)}) \in S_{\pi(k)}$. By Lemma 2, $\mathcal{L}(f) \ge \ell(\pi, p) \ge \text{OPT}(\text{TSPN}_m)$. Taking $\inf$ over $f$ yields $L_m^\star \ge \text{OPT}(\text{TSPN}_m)$.

Combine both inequalities above, we can finalize the proof.
\end{proof}

Therefore, discretized problem is exactly the TSPN. So global optimization and certification tools follow immediately. Mixed integer programming (MIP) solver can prove optimality and return a certificate as feasible solution + dual bound + branch-and-cut of zero gap.

\begin{theorem}[Exact global optimality certificate for TSPN] The MIP above solves the TSPN to global optimality. If the solver reports MIP gap 0, the returned solution is the exact global optimum of the discretized problem. 
\end{theorem}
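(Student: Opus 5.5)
The plan is to first write an explicit mixed-integer formulation of $(\text{TSPN}_m)$ (the MIP ``above''), and then prove two things. First, this MIP is an exact reformulation of $(\text{TSPN}_m)$: its feasible solutions correspond to pairs $(\pi,p)$ with equal objective values. Second, a branch-and-bound certificate with zero gap pins down the optimal value.

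\textbf{Formulation.} Introduce binary variables $x_{ij}\in\{0,1\}$ for $i\in\{0,1,\dots,H_m\}$ and $j\in\{1,\dots,H_m\}$, $i\neq j$, where node $0$ is the origin. Add degree constraints: node $0$ has out-degree $1$; every $j\ge1$ has in-degree $1$ and out-degree at most $1$; exactly $H_m-1$ arcs leave nodes $j\ge 1$. Add MTZ-type subtour elimination constraints $u_i-u_j+H_m x_{ij}\le H_m-1$ with $1\le u_j\le H_m$. The continuous variables are $p_h\in S_h$ (for compact, or polygonally approximated, $S_h$) and epigraph variables $d_{ij}\ge \|p_i-p_j\|-M(1-x_{ij})$, with big-$M$ equal to the diameter of the bounded region containing all $S_h$. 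The objective is $\sum_{i,j} d_{ij}$.

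\textbf{Step 1: exactness.} I would show that integer-feasible $x$ are exactly the Hamiltonian paths from $0$, i.e.\ permutations $\pi\in S_{H_m}$. This follows from the standard MTZ argument: the degree constraints force a path plus possibly cycles, and the $u$-ordering forbids cycles. Then, for fixed $(x,p)$, minimizing over $d$ gives $d_{ij}=\|p_i-p_j\|$ on active arcs and $d_{ij}$ ineffective on the others. So the optimal MIP objective for fixed $(\pi,p)$ equals $\ell(\pi,p)$. Hence $\mathrm{OPT}(\text{MIP})=\mathrm{OPT}(\text{TSPN}_m)$, and by the preceding Theorem this equals $L_m^\star$. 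Existence of an optimum (compactness of each $S_h$, continuity of $\ell$, finiteness of $S_{H_m}$) guarantees the value is attained.

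\textbf{Step 2: certificate.} The solver maintains an incumbent, which is a feasible $(\pi,p)$ with value $U$, and a global dual bound $L_{\text{low}}$. The dual bound is valid because every branch node's relaxation (LP or convex SOCP relaxation of the norm cones) lower-bounds all integer solutions in that subtree, and the open nodes partition the feasible set. Thus $L_{\text{low}}\le \mathrm{OPT}\le U$. A reported gap of $0$ means $U=L_{\text{low}}$, hence the incumbent is globally optimal and its value equals $L_m^\star$.

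\textbf{Main obstacle.} The hardest point is validity of the lower bounds when the neighborhoods $S_h=B_{g_h}$ are nonconvex, since the relaxations must then be genuinely relaxing. I would first handle this by decomposing each $S_h$ into finitely many convex pieces, for example the segments of a polygonal boundary. Selecting a piece becomes an extra binary disjunction, so every subproblem is convex (second-order cone) and its relaxation is a true lower bound. I would also state that the result holds modulo the solver's numerical tolerances.
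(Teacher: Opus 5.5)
Your Step 2 is the paper's entire proof. The paper assumes outright that $(\text{TSPN}_m)$ has been formulated exactly as a MIP. It then notes that a zero-gap report is a feasible incumbent plus a matching valid global lower bound, hence a certified global optimum. This is the \texttt{GlobalCertificate} structure in the Lean appendix, together with the hypothesis that the model's infimum equals the discretized optimum.

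The genuinely different part is your Step 1. Instead of assuming exactness, you build an explicit MTZ/big-$M$ mixed-integer SOCP, argue that it is exact, and argue that its node relaxations give valid bounds. This turns the paper's unstated hypothesis into a checkable statement, which is worth doing. As written, however, Step 1 has one error and one scope restriction.

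First, the formulation is not exact. The objective sums $d_{ij}$ over all arcs, and the only constraint on an inactive arc is $d_{ij}\ge\|p_i-p_j\|-M$. The minimization therefore drives those variables down to $\|p_i-p_j\|-M\le 0$, and the MIP objective generally undercounts $\ell(\pi,p)$. You need to add $d_{ij}\ge 0$, and take $M$ at least the diameter of $\{O\}\cup\bigcup_h S_h$, origin included. With that fix, the inner minimum over $d$ is indeed $\ell(\pi,p)$. Your MTZ argument then gives $\mathrm{OPT}(\text{MIP})=\mathrm{OPT}(\text{TSPN}_m)=L_m^\star$.

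Second, the convex-piece disjunction is exact only when every $S_h$ is a finite union of SOC-representable convex sets. Examples are polygonal boundaries, or unions of lines as in the paper's two-line model. A curved boundary such as a circle has only singletons as convex subsets, so it admits no such decomposition. Replacing it by a polygon changes the neighborhoods, so a zero gap then certifies the optimum of a perturbed TSPN, not $L_m^\star$.

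For general compact $B$ you have two options. You can keep exact representability as a hypothesis, as the paper does. Or you can use a nonconvex MINLP model whose spatial branch-and-bound relaxations still give valid lower bounds. Your Step 2 applies verbatim in either case.
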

\begin{proof}
Assume that the finite TSPN has been formulated exactly as MIP. If a globally convergent branch-and-cut algorithm terminates with reported optimality gap equal to zero, then the returned solution is a certified global optimum of the finite discretized problem. The certificate consists of a feasible solution, a matching global lower bound, and zero optimality gap. Therefore, the optimal value of the finite discretized problem is established rigorously.
\end{proof}

(5). We now analyze convergence: discretized certified optima to continuous Bellman's Lost-in-a-forest Problem optimum. We prove nested discretizations and certified global solves yield a monotone sequence of globally optimal discretized values converging to the continuous optimum.

\begin{theorem}[Value convergence: nested discretizations recover Bellman's Lost-in-a-forest Problem optimum] Let $G_m$ be nested with dense union in $G$. Let $L_m^\star$ be the exact global optimum of $(P_m)$ equivalently $\text{TSPN}_m$. Then
$
L_m^\star \uparrow L^\star \text{ as } m \rightarrow \infty.
$
\end{theorem}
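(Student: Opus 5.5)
By Lemma 1 the sequence $L_m^\star$ is nondecreasing and bounded above by $L^\star$. So it has a limit $L_\infty := \lim_m L_m^\star \le L^\star$. It remains to prove $L_\infty \ge L^\star$. If $L_\infty = \infty$ this is immediate, so assume $L_\infty<\infty$. The plan is a compactness argument that mirrors the proof of Theorem 2 (existence of minimizer), with the uniform feasibility constraint replaced by constraints that accumulate along the nested sets $G_m$.

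\textbf{Step 1 (discrete minimizers).} For each $m$, Theorem 3 identifies $L_m^\star$ with the finite problem $\text{TSPN}_m$. That problem attains its minimum, because the $S_h=B_{g_h}$ are compact and the starting point is fixed, so minimizing sequences of points stay bounded. Take the optimal polyline as $f_m\in\mathcal F_m$ with $\mathcal L(f_m)=L_m^\star\le L_\infty$. (Alternatively, take a near-minimizer with $\mathcal L(f_m)\le L_m^\star+1/m$.) Reparametrize by constant speed on $[0,1]$. The $f_m$ are then uniformly $L_\infty$-Lipschitz with $f_m(0)=0$. By Arzel\`a--Ascoli a subsequence converges uniformly to some $f$, and lower semicontinuity of length, exactly as in Theorem 2, gives $\mathcal L(f)\le \liminf \mathcal L(f_m)= L_\infty$.

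\textbf{Step 2 (feasibility on $\bigcup G_m$).} Fix $g\in G_k$. For all $m\ge k$ we have $g\in G_m$, so $d(f_m,B_g)=0$. The 1-Lipschitz estimate on $\operatorname{dist}(\cdot,B_g)$ used in Theorem 2 then gives $d(f,B_g)=0$. Hence $f$ meets $B_g$ for every $g$ in the dense set $\bigcup_m G_m$.

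\textbf{Step 3 (extension to all of $G$; the main obstacle).} Here we only have density rather than equality of the union with $G$, so we need continuity in $g$. Fix $g\in G$ and choose $g_j\to g$ with $g_j\in\bigcup G_m$. Pick $t_j$ with $f(t_j)\in B_{g_j}$ (the image of $f$ is compact and $B_{g_j}$ is closed, so the infimum is attained). Then
\[
\operatorname{dist}(f(t_j),B_g)\le d_H(B_{g_j},B_g),
\]
where $d_H$ is the Hausdorff distance. Write $g=(s,\alpha)$. The map $T_g$ is jointly continuous in $(g,x)$, and $B$ is compact, so $T_{g_j}\to T_g$ uniformly on $B$ and therefore $d_H(B_{g_j},B_g)\to0$. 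Passing to a convergent subsequence $t_j\to t$ gives $\operatorname{dist}(f(t),B_g)=0$. Since $B_g$ is closed, $f(t)\in B_g$. Compactness of $B$ is exactly what is needed in this step; for unbounded closed $B$ the Hausdorff convergence could fail, so I would invoke the standing assumption explicitly.

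\textbf{Conclusion.} Thus $f$ is feasible for (P), so $L^\star\le\mathcal L(f)\le L_\infty\le L^\star$ and $L_m^\star\uparrow L^\star$. As a byproduct, every uniform cluster point of discrete optima is a continuous minimizer.
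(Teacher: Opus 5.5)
Your proposal is correct and follows essentially the same route as the paper. Both arguments use the monotone bound from Lemma 1, discrete minimizers $f_m$, Arzel\`a--Ascoli with lower semicontinuity of length, feasibility on $\bigcup_m G_m$ via nestedness and the 1-Lipschitz distance estimate, and extension to all of $G$ through Hausdorff convergence $B_{g_n}\to B_g$ and compactness of $f([0,1])$. Your refinements are all sound: taking $f_m$ directly from attainment in $\text{TSPN}_m$, making the Hausdorff step quantitative, and bounding lengths by $L_\infty$ rather than $L^\star$, so that your argument also covers $L^\star=\infty$, which the paper's proof tacitly excludes.
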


\begin{proof}

By Lemma 1, $L_m^\star$ is nondecreasing and bounded above by $L^\star$, so $L_\infty := \lim_{m\rightarrow \infty} L_m^\star$ exists and $L_\infty \le L^\star$. We show $L_\infty \ge L^\star$.

For each $m$, pick a minimizer $f_m \in \mathcal{F}_m$, the existence holds by the same argument as Theorem 2, since $\mathcal{F}_m$ has finitely many closed constraints. Then $\mathcal{L}(f_m) = L_m^\star \leq L^\star$, so lengths are uniformly bounded.

As in Theorem 2, by Arzel\`a--Ascoli  theorem, we can extract a uniformly convergent subsequence $f_m \to f$. Fix any $g \in \bigcup_{m \geq 1} G_m$. Then $g \in G_{m_0}$ for some $m_0$, and because the discretizations are nested, $g \in G_m$ for all $m \geq m_0$. Hence $d(f_m, B_g) = 0$ for all $m \geq m_0$. By the continuity argument used in Theorem 2, $d(f, B_g) = 0$. Therefore,

$$d(f, B_g) = 0 \quad \forall g \in \bigcup_{m \geq 1} G_m. $$

Now use density of the union. Take any $g \in G$. Choose a sequence $g_n \in \bigcup_m G_m$ with $g_n \to g$. Because rigid motions depend continuously on parameters, $B_{g_n} \to B_g$. From above equation we can pick points $x_n \in f([0,1]) \cap B_{g_n}$. The curve image $f([0,1])$ is compact, so $x_n$ has a convergent subsequence $x_{n_k} \to x \in f([0,1])$.

Hausdorff convergence implies any limit point of $x_{n_k} \in B_{g_{n_k}}$ lies in $B_g$. Thus $x \in f([0,1]) \cap B_g$, i.e., $d(f, B_g) = 0$. Since $g$ was arbitrary, $f$ is feasible for the full problem (P).

Finally, length lower semicontinuity yields

$$\mathcal{L}(f) \leq \liminf_{m\rightarrow \infty} \mathcal{L}(f_m) = \lim_{m\rightarrow \infty} L_m^\star = L_\infty.$$

But $f$ is feasible for (P), so $L^\star \leq \mathcal{L}(f) \leq L_\infty$. Therefore $L^\star \leq L_\infty \leq L^\star$, hence $L_\infty = L^\star$.
\end{proof}

(6). Putting certification and convergence together, we can obtain and prove the fully general computational framework.

\begin{theorem}[Certified approximation scheme for Bellman's Lost-in-a-forest Problem optimal path length]

Assume $B \neq \emptyset$. Let $G_m$ be nested with dense union. Solve the MIP to zero gap, obtaining $\text{OPT}_{\text{TSPN}}(m)$.
\end{theorem}
\begin{proof}
Define certified bounds:
\[
    \underline{L}_m := L_m^\star (\text{exact TSPN}_m \text{ lower bound})
\]
\[
 \overline{L}_m := \text{OPT}_{\text{TSPN}}(m) (\text{exact finite upper approximation})
\]

Then $ \underline{L}_m \le L^\star \le \overline{L}_m. $ Furthermore,
$ \underline{L}_m \uparrow L^\star, \overline{L}_m \downarrow L^\star, $ as discretization is refined. Therefore
$ \overline{L}_m - \underline{L}_m \to 0. $

Thus, for any prescribed tolerance $\varepsilon > 0$, choosing $m$ large enough yields a rigorous certificate

$$L^\star \in [\underline{L}_m, \bar{L}_m], \bar{L}_m - \underline{L}_m \leq \varepsilon.$$
\end{proof}
This is the global-optimality certification. Therefore, under the stated modeling assumptions and discretization convergence framework, the problem admits a certified approximation scheme whose discretized solutions converge to the original continuous optimum of Bellman’s Lost-in-a-forest Problem.

\section{Results of searching for two lines}
In this section, we consider the case of searching for two lines with angle $\beta$ from the angle bisector in the middle. 

Original general solution equation of searching for two lines from the angle bisector in the middle with $\frac{1}{2}$ distance from the previous paper \cite{Deng2024} is
\begin{equation}
\begin{split}
\text{minimize} &\left\| p_{a_0}-O\right\| +\sum_{i=1}^{N-1}\left\| p_{a_i}-p_{a_{i-1}}\right\| \\
\text{subject to:} &p_{a_i}=(x_{a_i},y_{a_i}), O=(0,0),\\
&\left[-x_{a_i} \sin \frac{2\pi i}{N} + y_{a_i} \cos \frac{2\pi i}{N} + \frac{1}{2}\right]\cdot\left[-x_{a_i} \sin \left( \frac{2\pi i}{N} + \beta \right) + y_{a_i} \cos \left( \frac{2\pi i}{N} + \beta \right) + \frac{1}{2}\right] = 0, \\
&\forall i \in \{0, 1, 2, \ldots, N-1\}.\\
&\{a_0, a_1, \ldots, a_{N-1}\} \in S_N
\end{split}
\end{equation}

where $S_N$ is the symmetric group of all permutations of $\{0, 1, 2, \ldots, N-1\}$.

By solving above optimization, Table 1 lists some numerical results for various $\beta$ and various optimal path shapes.

\begin{table}[H]
    \centering
    \begin{tabular}{|c|c|c|c|c|}
         \hline
         Angle&  Arc+Tangent&  2 line segments&  3 line segments& 4 line segments\\\hline
         $0\pi/180$ &  1.62782&  1.66509&  1.63224& 1.63224\\\hline
         $10\pi/180$ &  1.71509&  1.78534&  1.72488& 1.72349\\\hline
         $20\pi/180$ &  1.80236&  1.91912&  1.82044& 1.80844\\\hline
         $30\pi/180$ &  1.88962&  2.06918&  1.91943& 1.91919\\\hline
         $40\pi/180$ &  1.97689 &  2.23903 &  2.02237 &  1.99290\\\hline
         $50\pi/180$ &  2.06416 &  2.43327 &  2.12981 &  2.08786\\\hline
         $60\pi/180$ &  2.15142 &  2.65802 &  2.24235 &  2.18428\\\hline
         $70\pi/180$ &  2.23869&  2.51003&  2.27414 &  2.26938\\\hline
         $75\pi/180$ &  2.28232&  2.40805&  2.25994 &  2.25994\\\hline
         $80\pi/180$ &  2.32596&  2.30995&  2.23975 &  2.23976\\\hline
         $85\pi/180$ &  2.36959&  2.21262&  2.19793 &  2.19878\\\hline
         $90\pi/180$ &  2.41322&  2.12132&  2.12132 &  2.12132\\\hline
         $95\pi/180$ &  2.45686&  2.17768 &  2.17768 &  2.17768\\\hline
         $100\pi/180$ &  2.50049&  2.28429 &  2.28428 &  2.28429\\\hline
         $105\pi/180$ &  2.54412&  2.40964 &  2.38526 &  2.38424\\\hline
         $110\pi/180$ &  2.58776&  2.55515 &  2.46098 &  2.45504\\\hline
         $115\pi/180$ &  2.63139&  2.72541 &  2.54224 &  2.54224\\\hline
         $120\pi/180$ &  2.675&  2.92649 &  2.57673 &  2.57673\\\hline
         $125\pi/180$ &  2.71866&  3.16673 &  2.59000 &  2.59000\\\hline
         $130\pi/180$ &  2.76229&  3.45778 &  2.65230 &  2.65233\\\hline
         $135\pi/180$ &  2.80592&  3.81647 &  2.73816 &  2.73116\\\hline
         $140\pi/180$ &  2.84956&  4.26809 &  2.83567 &  2.80107\\\hline
         $150\pi/180$ &  2.93682&  5.63557 &  3.07218 &  2.91254\\\hline
         $160\pi/180$ &  3.02409&  - &  3.38014 &  3.08883\\\hline
         $180\pi/180$ &  3.19862 &  - &  4.34784 &  3.59635\\\hline
    \end{tabular}
    \caption{Numerical results of searching for two lines with angle $\beta$. The path can be arc and tangent line, or line segments.}
    \label{tab:placeholder}
\end{table}

We plot results in Figure 1, including arc and tangent line, and 3-6 line segments. Shortest path should be the minimum value for each angle.

It is noted that similar approaches allow for the construction of more line segments, and finer numerical calculations and segmentation can yield more accurate results. 

\newpage
\begin{figure}[H]
    \centering
    \includegraphics[width=1\linewidth]{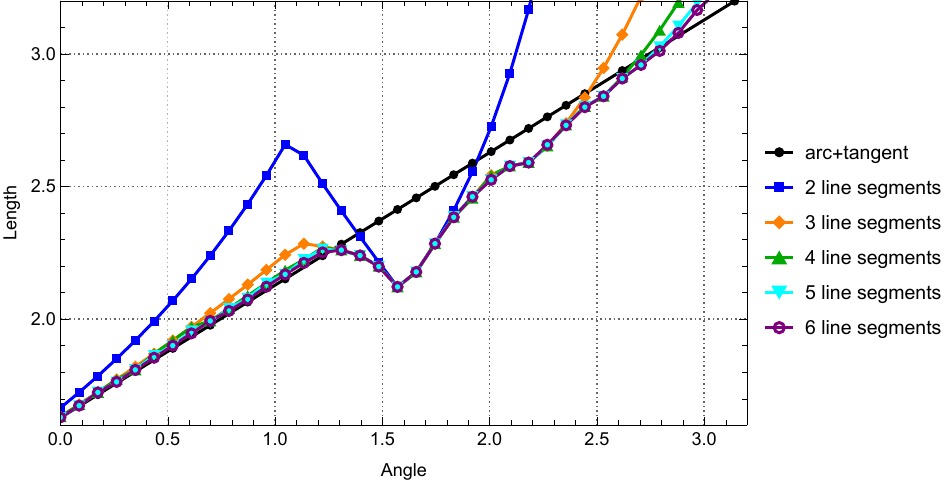}
    \caption{Length of optimal path in various angles $\beta$ of searching two lines}
\end{figure}

For arc and tangent line, similar to Section 4.2.9 of the previous paper \cite{Deng2024}, the continuous-form equations can be obtained as

\begin{equation}
\begin{aligned}
\text{minimize} &\sqrt{x(0)^2+y(0)^2}+\int_{0}^{\beta +\pi}  \sqrt{x'(t)^2+y'(t)^2}dt\\
\text{subject to:} & x(t) \cos t + y(t) \sin t - 1 = 0,\forall t\in \left [ 0, \beta +\pi\right ]
\end{aligned}
\end{equation}

Euler-Lagrange equations can be used to find the minimum of above functional. It serves as an upper bound for the solution in Figure 1.

The analytical solution for arc and tangent line in second column of Table 1 can be written as
\begin{equation}
1/2+\sqrt{3}/2+\pi/12+\beta/2
\end{equation}

This result can also be derived from the well-known search for a single line with unit distance.

For multi-segment polyline, we have the following conversion of coverage solution equation below:

For line segments, write the points in polar coordinates
\begin{equation}
P_i=(R_i\cos\phi_i,R_i\sin\phi_i), i=1, 2, 3, ...
\end{equation}

Let $R_1 = \frac{1}{2\cos\gamma_1}$, $R_2 = \frac{1}{2\cos\gamma_2}$, polyline length $\left | OP_1 \right | +\left | P_1P_2 \right | $ can be written as 
\begin{equation}
L_{2lines} = R_1 + \sqrt{R_1^2 + R_2^2 - 2R_1R_2 \cos(\phi_2 - \phi_1)} 
\end{equation}

Then we can solve two-line optimal path by
\begin{equation}
\begin{array}{l}
\mathop{\text{minimize}}\limits_{\gamma_i, \phi_i } L_{2lines} \\
\text{subject to:}0\le \gamma_i  \le \pi/2, 0 \le \phi_i \le 2 \pi,\\
([\phi_1 - \gamma_1, \phi_1 + \gamma_1] \\
\cup [\phi_1 - \gamma_1 + \beta, \phi_1 + \gamma_1 + \beta] \\
\cup [\phi_2 - \gamma_2, \phi_2 + \gamma_2] \\
\cup [\phi_2 - \gamma_2 + \beta, \phi_2 + \gamma_2 + \beta]) \bmod 2\pi = [0, 2\pi]
\end{array}
\end{equation}
The constraint is that the rotation angle covers the entire range $[0,2\pi]$.
And without losing generality, we can set $\phi_1=0$.

The corresponding results are shown by the blue line in Figure 1, and in the third column of Table 1.

For three line segments, let $R_1 = \frac{1}{2\cos\gamma_1}$, $R_2 = \frac{1}{2\cos\gamma_2}$, $R_3 = \frac{1}{2\cos\gamma_3}$, polyline length $\left | OP_1 \right | +\left | P_1P_2 \right |+\left | P_2P_3 \right | $ can be written as 
\begin{equation}
L_{3lines} = R_1 + \sqrt{R_1^2 + R_2^2 - 2R_1R_2\cos(\phi_2 - \phi_1)} + \sqrt{R_2^2 + R_3^2 - 2R_2R_3\cos(\phi_3 - \phi_2)}
\end{equation}
Then we can find three-line optimal path by
\begin{equation}
\begin{array}{l}
\mathop{\text{minimize}}\limits_{\gamma_i, \phi_i } L_{3lines}\\
\text{subject to:}0\le \gamma_i  \le \pi/2, 0 \le \phi_i \le 2 \pi,\\
\begin{aligned}
( & [\phi_1 - \gamma_1, \phi_1 + \gamma_1] \\
& \cup [\phi_1 - \gamma_1 + \beta, \phi_1 + \gamma_1 + \beta] \\
& \cup [\phi_2 - \gamma_2, \phi_2 + \gamma_2] \\
& \cup [\phi_2 - \gamma_2 + \beta, \phi_2 + \gamma_2 + \beta] \\
& \cup [\phi_3 - \gamma_3, \phi_3 + \gamma_3] \\
& \cup [\phi_3 - \gamma_3 + \beta, \phi_3 + \gamma_3 + \beta] ) \bmod 2\pi = [0, 2\pi]
\end{aligned}
\end{array}
\end{equation}
And without losing generality, we can set $\phi_1=0$.

Four line segments and more are similar. Additional detailed results are shown in Figure 2. 

\begin{figure}[H]
  \centering
  (a)\includegraphics[width=4cm]{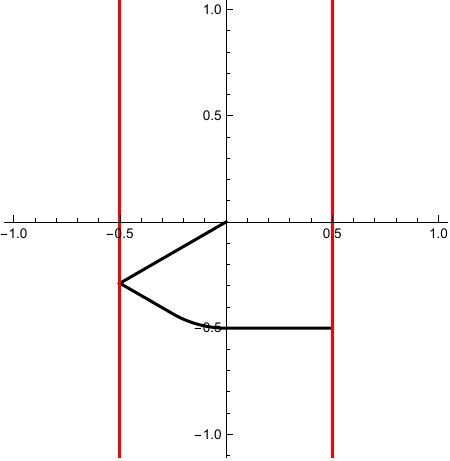}
  (b)\includegraphics[width=4cm]{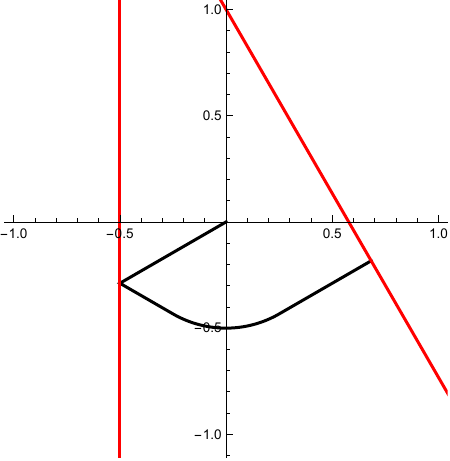}
  (c)\includegraphics[width=4cm]{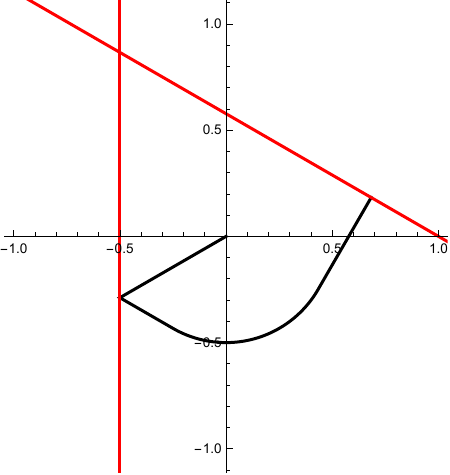}
  (d)\includegraphics[width=4cm]{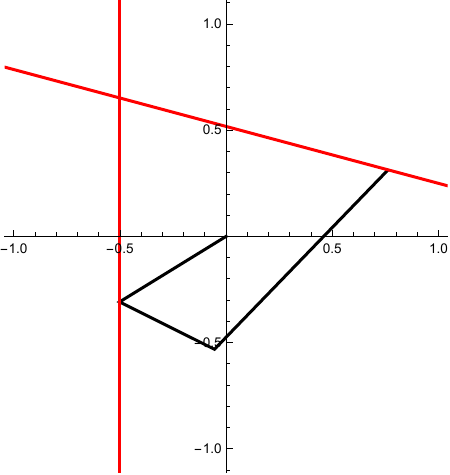}
  (e)\includegraphics[width=4cm]{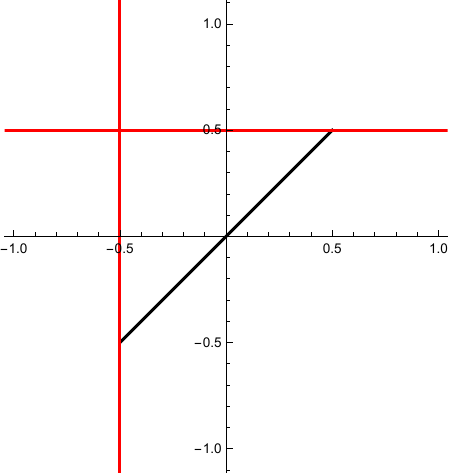}
  (f)\includegraphics[width=4cm]{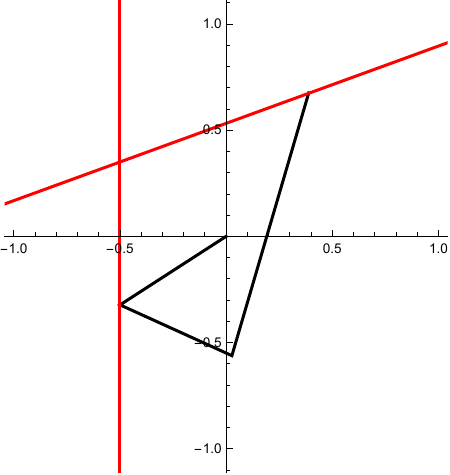}
  (g)\includegraphics[width=4cm]{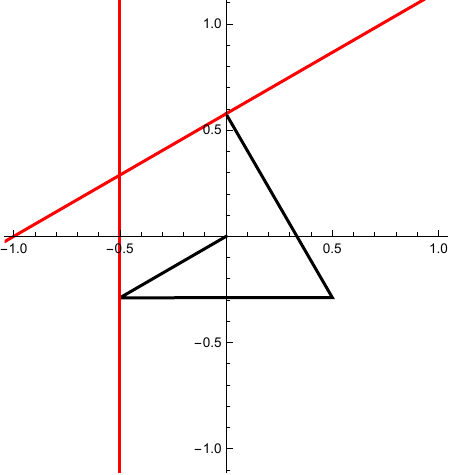}
  (h)\includegraphics[width=4cm]{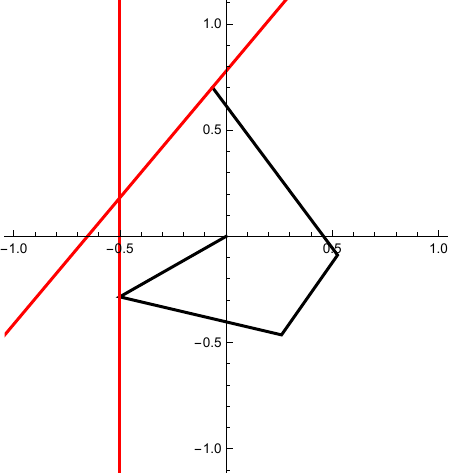}
  (i)\includegraphics[width=4cm]{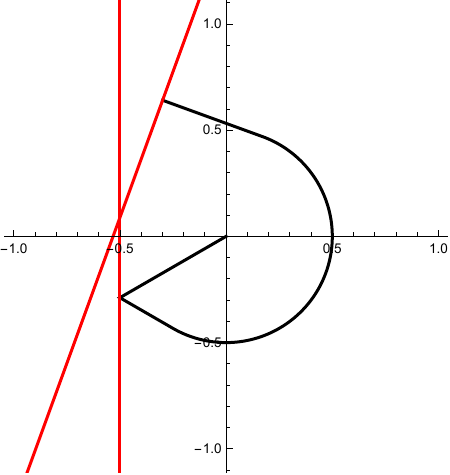}
  (j)\includegraphics[width=4cm]{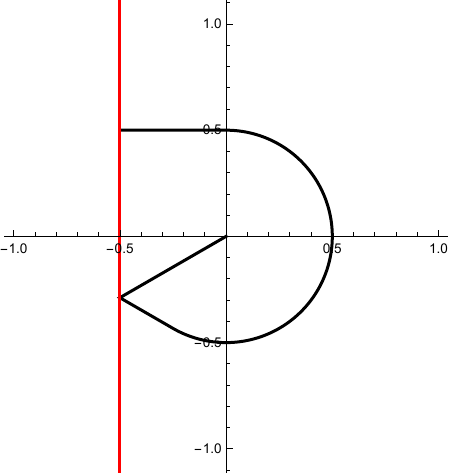}
  \caption{More examples of detailed results of searching for two lines with from the angle bisector in the middle at certain angle $0\pi/180$, $30\pi/180$, $60\pi/180$, $75\pi/180$, $90\pi/180$, $110\pi/180$, $120\pi/180$, $140\pi/180$, $160\pi/180$, $180\pi/180$. Among them, $90\pi/180$ is two line segments, $75\pi/180$,$110\pi/180$, $120\pi/180$ are three line segments, $140\pi/180$ is four line segments. (Black curve is escape path, and red curve is forest boundary)}
\end{figure}

The results for the two-line search problem developed in this section provide useful insights into Bellman’s Lost-in-a-Forest Problem for triangles/three lines, the fitting of a curve or polyline within a triangle, and the covering of a curve or polyline by a triangle. The rich and complex structures that already emerge in the two-line case further foreshadow the more complicated scenarios that arise for triangles and motivate their systematic analysis and classification, including important special cases such as isosceles triangles \cite{Gibbs2016}. While there are some results for regular polygons, relatively little is known about searching or escaping arbitrary polygons \cite{Finch2004} \cite{Gibbs2016-1} \cite{Kübel2021} \cite{Wetzel2003}.

\section{Connection to Wetzel’s problem/unit arc covering problem}

\begin{figure}[H]
    \centering
    \includegraphics[width=1\linewidth]{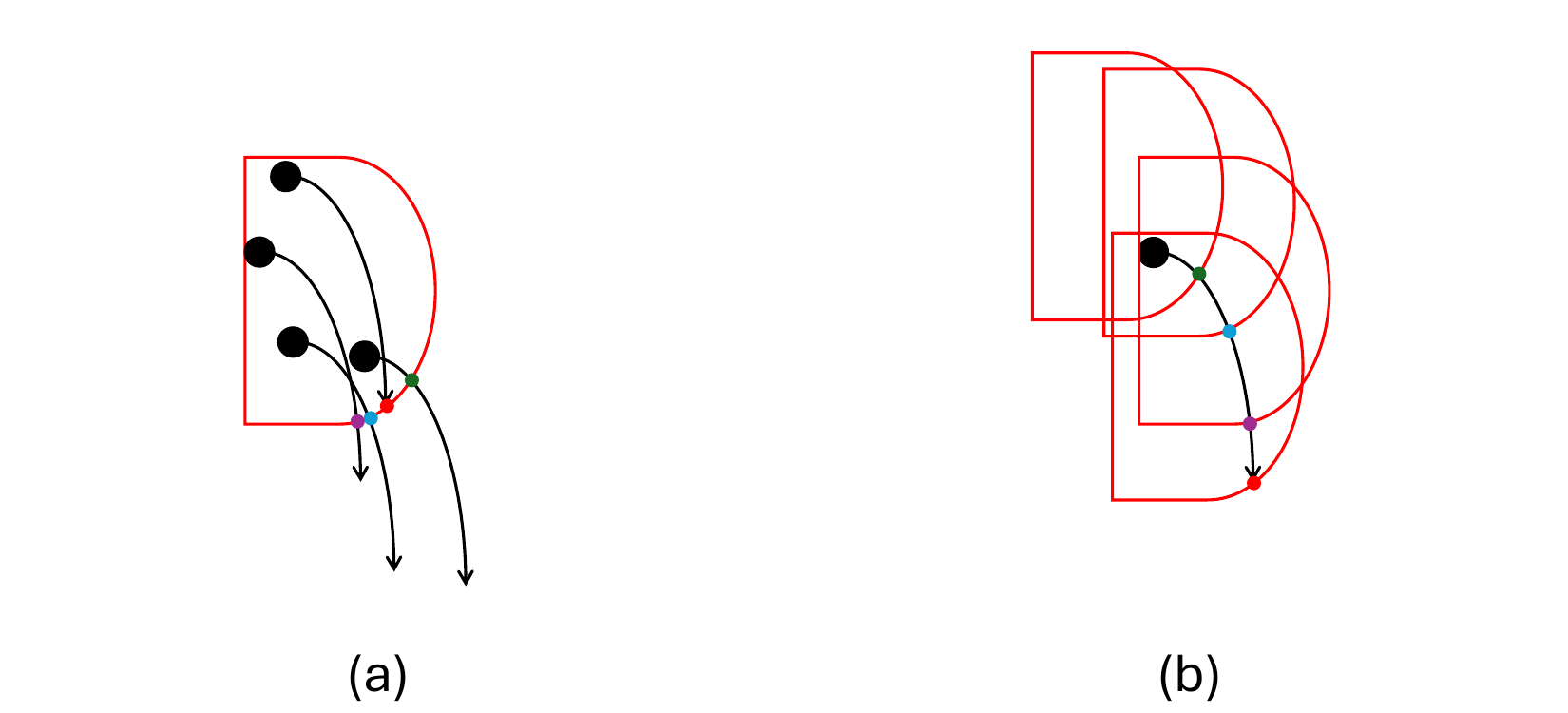}
    \caption{Weak Form IB - Starting from unknown points. We convert the path out of the forest boundary in different locations to the translation of the forest boundary. The translated escape points are all on the path. (Black curve represents the escape path, black dot is the starting point, colorful dots are the escape points, and red curve is the forest boundary)}
\end{figure}

Similar to Moser’s worm problem, Wetzel’s problem/unit arc covering problem is to find the smallest region that can contain a congruent copy of every planar arc of length 1 \cite{Wetzel1973}  \cite{Brass2005} . Different from the worm problem, the curve here can only move in translation and cannot rotate.

This corresponds to another variant/weak form of the Bellman’s Lost-in-a-forest Problem: the hiker does not know the location, but knows the direction, what is the shortest path to search?

In the previous paper \cite{Deng2024}, we discussed two weak forms of the Bellman’s Lost-in-a-forest Problem. Here we give the third weak form, which corresponds to Wetzel’s unit arc covering problem, and general solution can be obtained in a similar way.

\begin{definition}[Weak Form IB - Starting from unknown points with known orientation]
\end{definition}

Recall Figure 1 in our previous paper \cite{Deng2024}, the solution involves determining the shortest path passing through the escape points starting from the origin (0, 0). But the location and order of the points is unknown and needs to be solved.

Figure 3 demonstrate the proof of concept for Wetzel’s unit arc covering problem. We can transform the escape path from the forest boundary across various locations into a path that does not move by the translation of the forest boundary. With this perspective, we establish that for any translation, the escape path must pass through an escape point. Consequently, we reformulate the original problem into finding the shortest path that connects all possible escape points. This transformation allows us to formulate this problem as a TSPN.

\section{Extension to Lost-in-a-Forest Problem with closed path, and Moser’s worm problem for closed curve}
As for Bellman’s Lost-in-a-Forest Problem with closed curves, only a few papers discuss them, but most remain open problems \cite{Brass2005}  \cite{Ghomi2017} \cite{Melzak2007} \cite{Chakerian1973} \cite{Füredi2011} \cite{Movshovich2025}. 

In our previous paper \cite{Deng2024}, we already provided the general solution equation for a closed curve. The formulation is obtained by adding one additional term to the objective function. Recalling Definitions 2.2 and 2.4 from \cite{Deng2024}, the general equations for Weak Forms I and II are as follows:

\[
\mathop{\text{minimize}}\limits_{\pi\in S_N,p_i\in \partial\Omega_i} \mathcal \|p_{\pi(0)}-O\|
+
\sum_{i=1}^{N-1}\|p_{\pi(i)}-p_{\pi(i-1)}\|+\|p_{\pi(N-1)}-O\|\]

\[
\mathop{\text{minimize}}\limits_{\pi\in S_{MN},p_{k,i}\in\partial\Omega_{k,i}}
\|p_{\pi(1)}-O\|
+
\sum_{h=2}^{MN}\|p_{\pi(h)}-p_{\pi(h-1)}\|+\|p_{\pi(MN)}-O\|
\]

Add one term at last in objective function, which is the distance between beginning and end of path. After discretization, the problem becomes a Hamiltonian path problem with constraints of neighborhoods. 

Thus, with adding one term in objective function in Eq (4), we can solve the optimization. Table 2 lists some numerical results.

\begin{table}[H]
    \centering
    \begin{tabular}{|c|c|c|c|c|}
         \hline
         Angle&  Arc+Tangent&  3 line segments&  4 line segments& 5 line segments\\\hline
         $0\pi/180$ &  2.25565&  2.41421&  2.28454& 2.28456\\\hline
         $10\pi/180$ &  2.34292&  2.5715&  2.38684& 2.35890\\\hline
         $20\pi/180$ &  2.43018&  2.74748&  2.49336& 2.45342\\\hline
         $30\pi/180$ &  2.51745 &  2.9459 &  2.60461 &  2.54986\\\hline
         $40\pi/180$ &  2.60471 &  3.17159 &  2.72113 &  2.64845\\\hline
         $50\pi/180$ &  2.69198 &  3.43084 &  2.84356 &  2.74952\\\hline
         $60\pi/180$ &  2.77925 &  3.73205 &  2.97254 &  2.85360\\\hline
         $70\pi/180$ &  2.86651 &  3.38175 &  3.10886 &  2.89420\\\hline
         $75\pi/180$ &  2.91015 &  3.22939 &  3.08128 &  2.90581\\\hline
         $80\pi/180$ &  2.95378 &  3.08781 &  3.01762 &  2.91467\\\hline
         $85\pi/180$ &  2.99741 &  2.95474] &  2.93872 &  2.90458\\\hline
         $90\pi/180$ &  3.04105 &  2.82843 &  2.82842 &  2.82842\\\hline
         $95\pi/180$ &  3.08468 &  2.96037 &  2.92823 &  2.92680\\\hline
         $100\pi/180$ &  3.12831 &  3.11145 &  2.99243 &  2.99245\\\hline
         $105\pi/180$ &  3.17195 &  3.28536 &  3.07424 &  3.05812\\\hline
         $110\pi/180$ &  3.21558 &  3.48689 &  3.15262 &  3.13283\\\hline
         $115\pi/180$ &  3.25921 &  3.72232 &  3.16857 &  3.15669\\\hline
         $120\pi/180$ &  3.30285 &  4 &  3.15470 &  3.15470\\\hline
         $125\pi/180$ &  3.34648 &  4.33136 &  3.22882 &  3.22885\\\hline
         $130\pi/180$ &  3.39011 &  4.7324 &  3.32581 &  3.32582\\\hline
         $135\pi/180$ &  3.43375 &  5.22625 &  3.43532 &  3.37772\\\hline
         $140\pi/180$ &  3.47738 &  5.84761 &  3.55906 &  3.40907\\\hline
         $145\pi/180$ &  3.52101 &  6.65102 &  3.69914 &  3.48605\\\hline
         $150\pi/180$ &  3.56465 &  7.72741 &  3.85810 &  3.57572\\\hline
         $160\pi/180$ &  3.65191 &  11.5175 &  4.24622 &  3.78904\\\hline
         $180\pi/180$ &  3.82645 &  - &  5.46236 &  4.40378\\\hline
    \end{tabular}
    \caption{Numerical results of searching for two lines with angle $\beta$ with closed path}
    \label{tab:placeholder}
\end{table}

We plot the results in Figure 4, including arc and tangent line and 3-7 line segments.

\begin{figure}[H]
    \centering
    \includegraphics[width=1\linewidth]{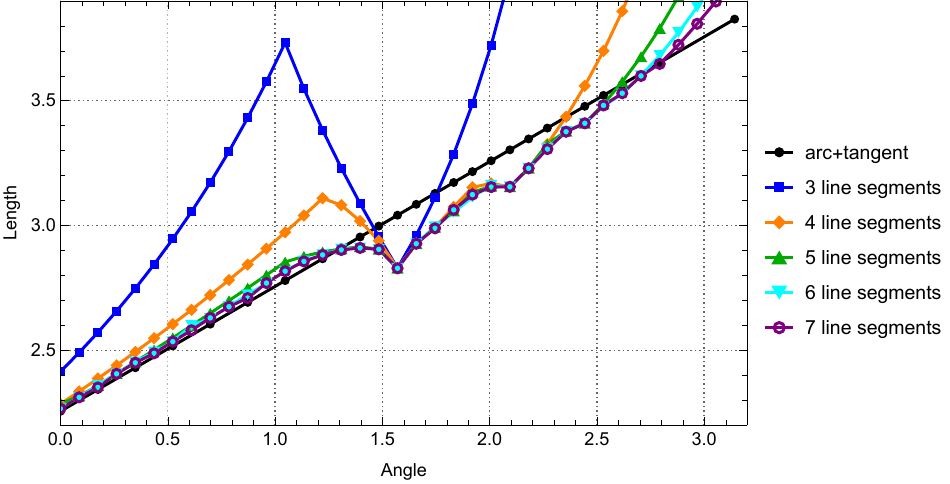}
    \caption{Length of optimal closed path in various angles $\beta$ of searching two lines}
\end{figure}

Similarly, for arc and tangent line, the continuous-form equations can be obtained as
\begin{equation}
\begin{aligned}
\text{minimize} &\sqrt{x(0)^2+y(0)^2}+\int_{0}^{\beta +\pi}  \sqrt{x'(t)^2+y'(t)^2}dt+\sqrt{x(\beta +\pi)^2+y(\beta +\pi)^2}\\
\text{subject to:} & x(t) \cos t + y(t) \sin t - 1 = 0,\forall t\in \left [ 0, \beta +\pi\right ]
\end{aligned}
\end{equation}

Euler-Lagrange equation can be used to find the minimum of this functional. It serves as an upper bound for the solution.

Then for multi-segment polyline, we can have the conversion of coverage solution equation below:

Let $R_1 = \frac{1}{2\cos\gamma_1}$, $R_2 = \frac{1}{2\cos\gamma_2}$, polyline length $\left | OP_1 \right | +\left | P_1P_2 \right |+\left | P_2O \right | $ can be written as 
\begin{equation}
L_{3lines closed} = R_1 + \sqrt{R_1^2 + R_2^2 - 2R_1R_2 \cos(\phi_2 - \phi_1)}+R_2 
\end{equation}

Comparing with Eq(8), we add one term in objective function, which is distance between beginning and end point of path to make length calculation for closed path. 

Then we can find three-line closed optimal path by
\begin{equation}
\begin{array}{l}
\mathop{\text{minimize}}\limits_{\gamma_i, \phi_i } L_{3lines closed} \\
\text{subject to:}0\le \theta_i  \le \pi/2, 0 \le \phi_i \le 2 \pi,\\
([\phi_1 - \gamma_1, \phi_1 + \gamma_1] \\
\cup [\phi_1 - \gamma_1 + \beta, \phi_1 + \gamma_1 + \beta] \\
\cup [\phi_2 - \gamma_2, \phi_2 + \gamma_2] \\
\cup [\phi_2 - \gamma_2 + \beta, \phi_2 + \gamma_2 + \beta]) \bmod 2\pi = [0, 2\pi]
\end{array}
\end{equation}

More line segments are similar to Eq(13-14). More examples of detailed results are shown in Figure 5. Shortest path should be the minimum value for each angle.

It is noted that similar approaches allow for the construction of more line segments, and finer numerical calculations and segmentation can yield more accurate results. 

\begin{figure}[H]
  \centering
  \includegraphics[width=4cm]{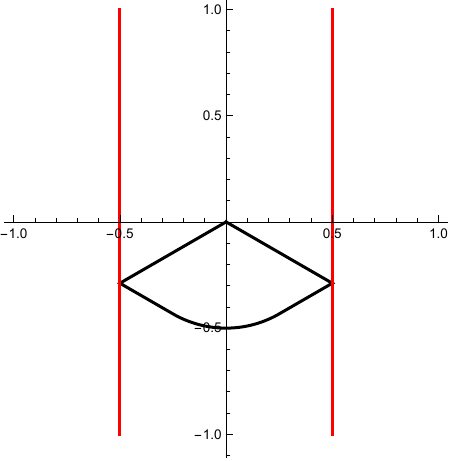}
  \includegraphics[width=4cm]{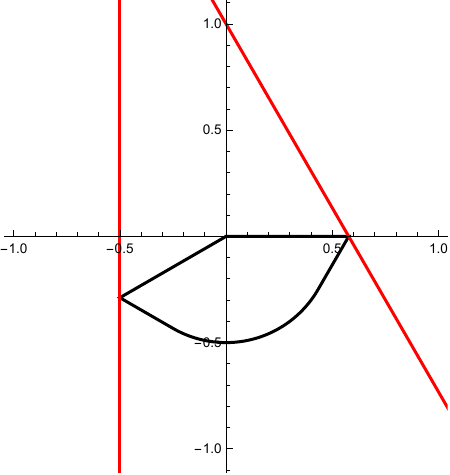}
  \includegraphics[width=4cm]{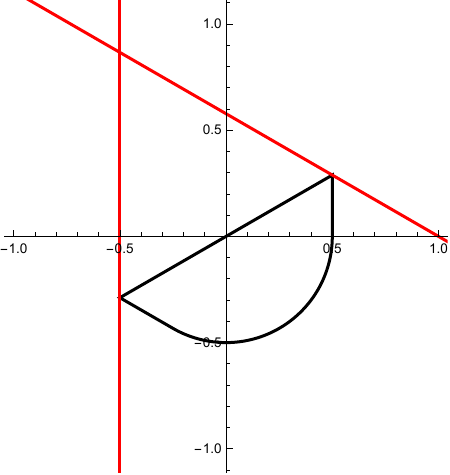}
  \includegraphics[width=4cm]{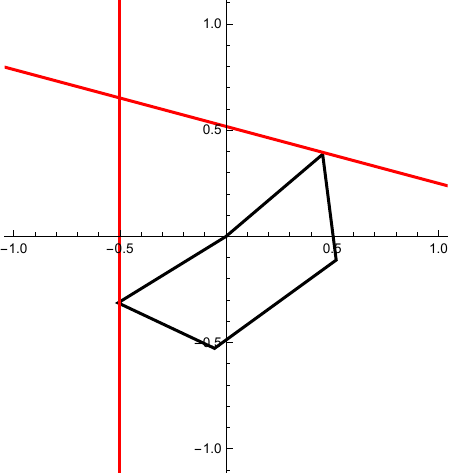}
  \includegraphics[width=4cm]{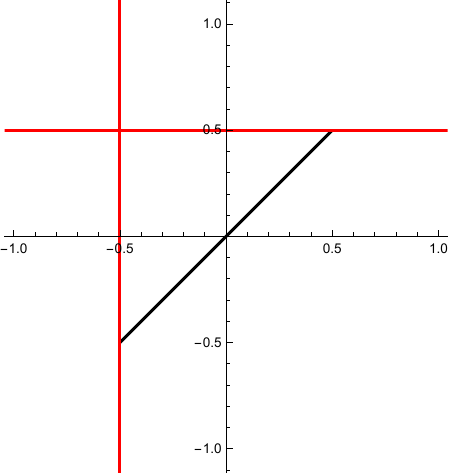}
  \includegraphics[width=4cm]{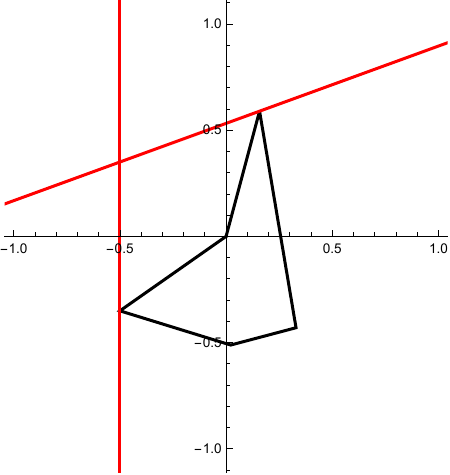}
  \includegraphics[width=4cm]{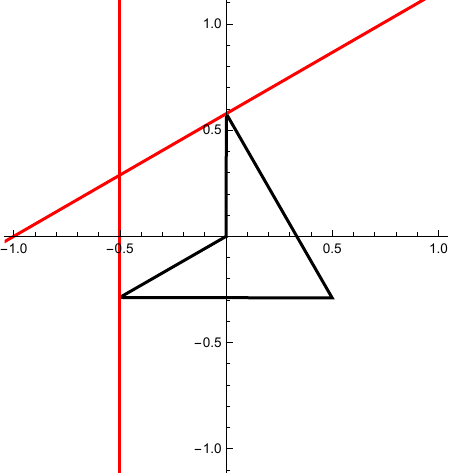}
  \includegraphics[width=4cm]{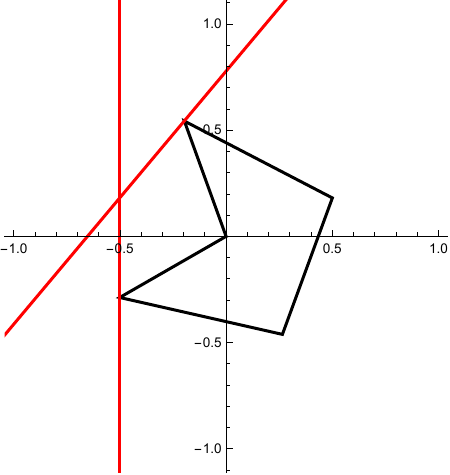}
  \includegraphics[width=4cm]{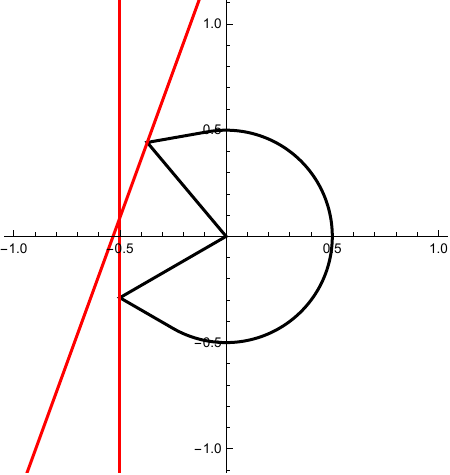}
  \includegraphics[width=4cm]{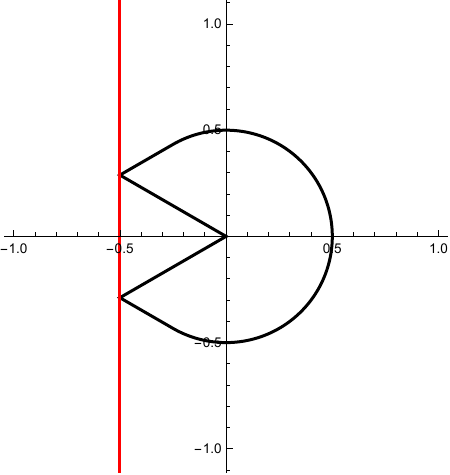}
  \caption{More examples of detailed results of searching for two lines with closed path from the angle bisector in the middle at certain angle $0\pi/180$, $30\pi/180$, $60\pi/180$, $75\pi/180$, $90\pi/180$, $110\pi/180$, $120\pi/180$, $140\pi/180$, $160\pi/180$, $180\pi/180$. Among them, $90\pi/180$ is two line segments, $75\pi/180$,$110\pi/180$, $120\pi/180$ are three line segments, $140\pi/180$ is four line segments. (Black curve is escape path, and red curve is forest boundary)}
\end{figure}

The results of searching for two lines with closed path in this section should provide insights into study polygons (triangles, quadrilaterals, etc) that fit within or covered by other polygons (triangles, quadrilaterals, etc).

\section{Extension of forest and worm problems to three dimensions}
In our previous paper \cite{Deng2024}, we briefly introduced Bellman’s Lost-in-a-Forest Problem and Moser's worm problem in three dimensions. The Bellman’s Lost-in-a-forest Problem in 3D was mentioned in some previous papers \cite{Croft2012} \cite{Zalgaller2003} \cite{Chan2003}  \cite{Croft1969} \cite{Zalgaller1994} \cite{Treeby2026} \cite{Ghomi2021} , and widely known to be more difficult \cite {Brass2005}  \cite{Ghomi2017} \cite{ASTAD1999}.

Following a literature review, we found that previous studies have not specifically described or compared the various dimensions of Bellman’s Lost-in-a-Forest Problem and their relationship to the convex hull \cite{Wetzel2003}. Table 3 presents a comparison of four variations and provides accompanying descriptions.

\begin{table}
    \centering
    \begin{tabular}{|m{3.5cm}|m{2cm}|m{2.5cm}|m{3cm}|m{3cm}|}
    \hline
        \textbf{1D Description and Results} & \textbf{2D Forest Description} & \textbf{2D Convex Hull Description} & \textbf{2D Results Visualization} & \textbf{3D Results Visualization} \\ \hline
        Starting from the origin on a number line, search all points ranging from -n to n. The shortest path is to travel from 0 to n, and then return to -n & Search for all lines with unit distance from origin, path not close & The shortest non-closed curve originating from the origin whose convex hull contains the unit circle & \includegraphics[width=0.2\textwidth]{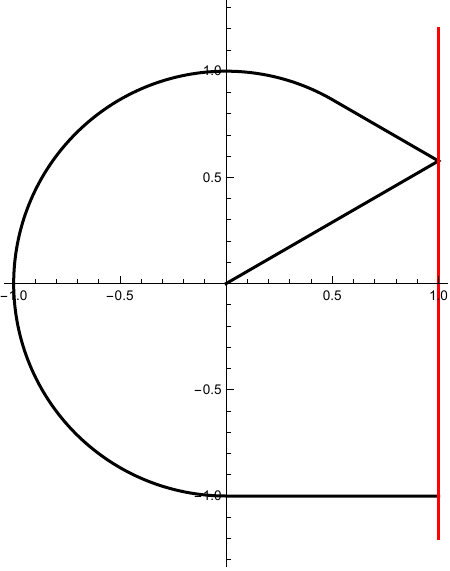} & \includegraphics[width=0.2\textwidth]{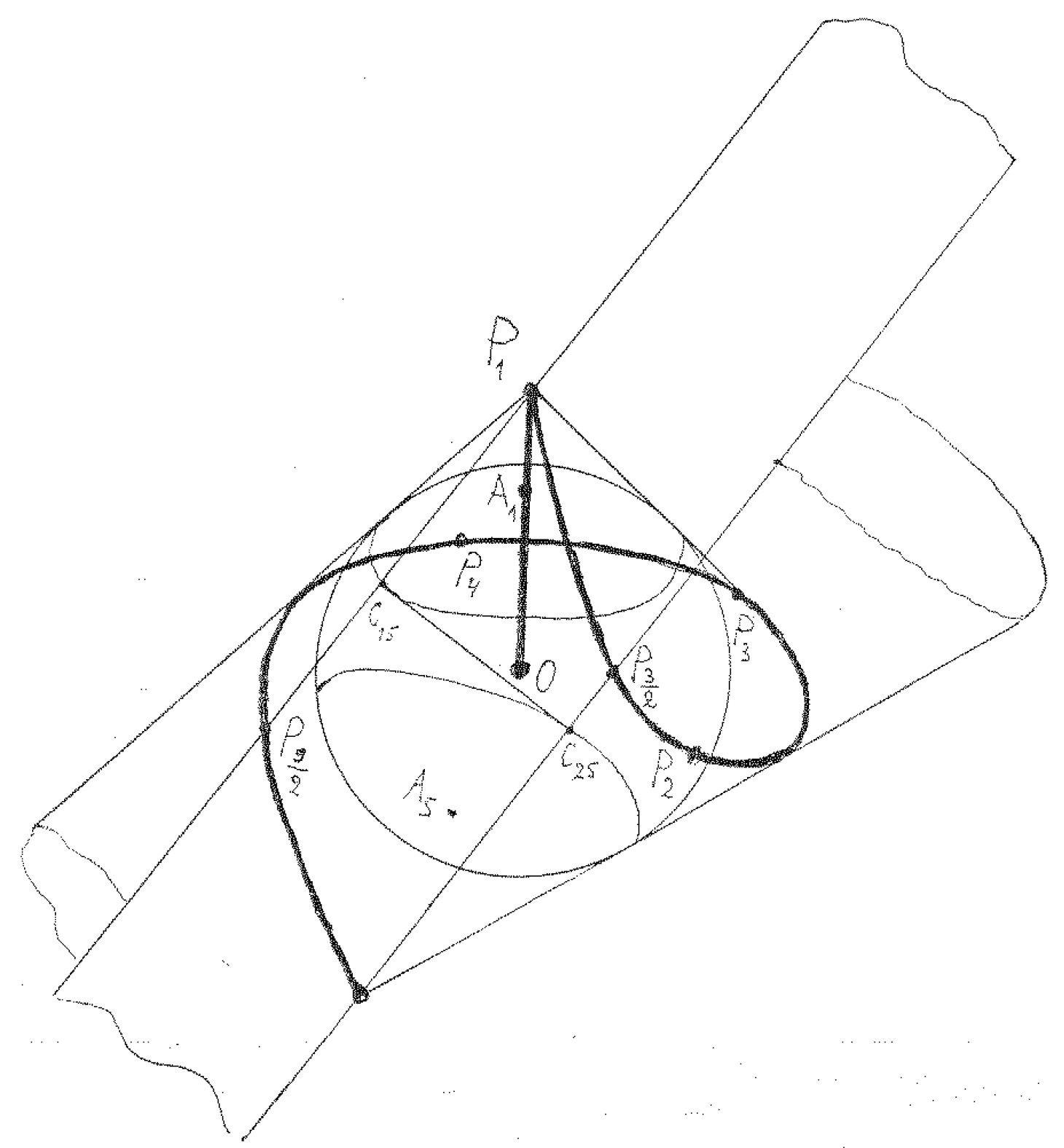} Path length $\approx 11.605$ \cite{Zalgaller2003} \\ \hline
        Search for all points on the number line from -n to n on a number line. The shortest path is from -n to n & Search for all lines with unit distance from any point, path not close & The shortest open curve whose convex hull contains the unit circle & \includegraphics[width=0.2\textwidth]{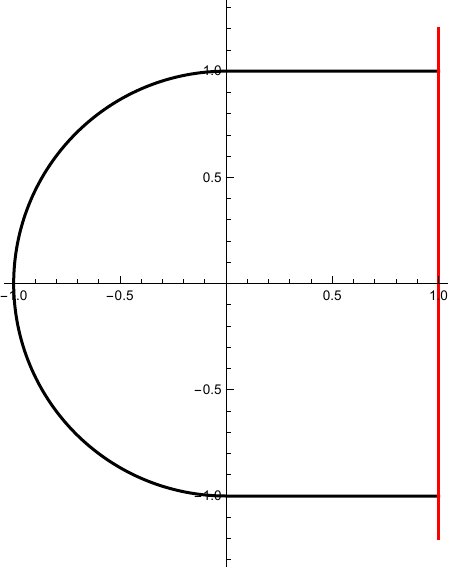} & Not found in the literature \\ \hline
        Starting from origin on a number line, search all points ranging from -n to n, and return to origin. The shortest path involves traveling from 0 to n, then back to -n, and finally returning to the origin & Search for all lines with unit distance from origin and return to origin, path close & The shortest closed curve starting from and returning to the origin, whose convex hull contains the unit circle & \includegraphics[width=0.2\textwidth]{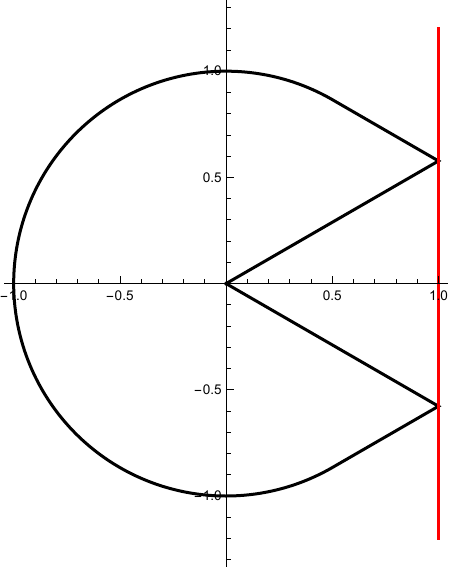} & Not found in the literature\\ \hline
        Search for all points on number line from -n to n and return to starting point. The shortest path is from n to -n to n & Search for all lines with unit distance with path close & The shortest closed curve whose convex hull contains the unit circle & \includegraphics[width=0.2\textwidth]{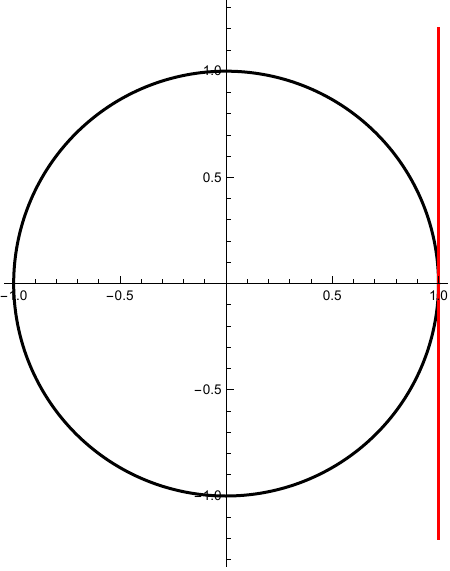} & \includegraphics[width=0.2\textwidth]{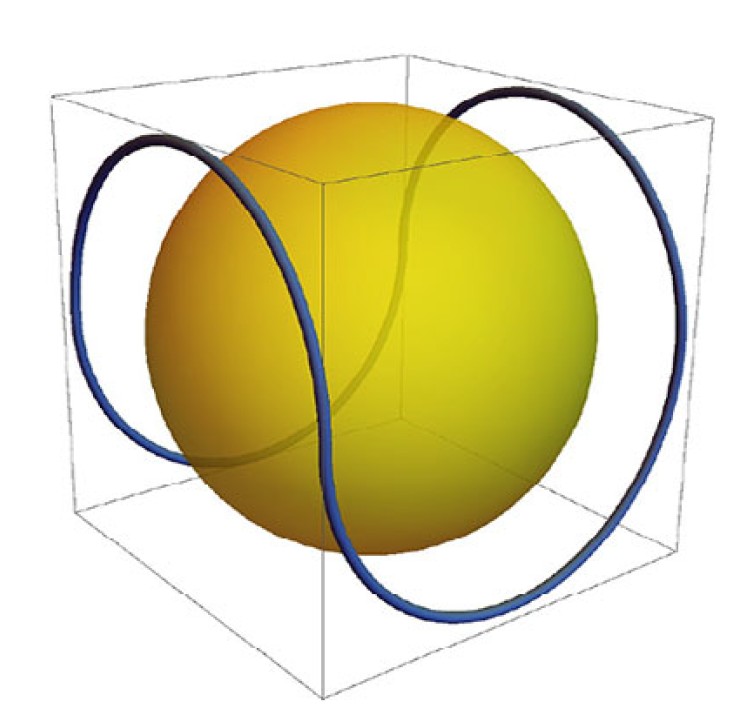} Path length=$4\pi$ \cite{Ghomi2021} \cite{Zalgaller1996} \\ \hline
    \end{tabular}
    \caption{Comparison accompanying descriptions and results of 4 variations of Bellman’s Lost-in-a-forest Problem and convex hull in 1D, 2D, and 3D}
    \label{tab:placeholder}
\end{table}

The descriptions for 3D forests and convex hulls are similar, just replacing the unit circle with unit ball, and the tangent line with tangent plane. Therefore, they are not reiterated in the table.

Based on Weak Form I \cite{Deng2024}, we can obtain the discretized solution equations for variations of Bellman’s Lost-in-a-forest Problem and Convex Hull for 1D, 2D, and 3D listed below:

\begin{itemize}
    \item Variation 1, path starting from origin and not closed

Discretized solution equation in 1D:
\begin{equation}
\begin{aligned}
& \text{minimize} \left\| x_{a_0} - 0\right\|  + \sum_{i=1}^{2n} \left\|x_{a_i} - x_{a_{i-1}}\right\| \\
& \text{subject to:} \{x_{a_0}, x_{a_1}, \dots, x_{a_{2n}}\} \in S_n
\end{aligned}
\end{equation}
where $S_n$  is the symmetric group of all permutations of $\{-n,-n+1, ..., n-1, n\}$

Discretized solution equation in 2D:
\begin{equation}
\begin{aligned}
& \text{minimize} \left\| p_{a_0}-O\right\| +\sum_{i=1}^{N-1}\left\| p_{a_i}-p_{a_{i-1}}\right\| \\
& \text{subject to:} p_{a_i}=(x_{a_i},y_{a_i}), O=(0,0)\\
&x_{a_i} \cos \frac{2\pi i}{N} + y_{a_i} \sin \frac{2\pi i}{N} - 1 = 0, \forall i \in \{0, 1, 2, \dots, N-1\} \\
& \{a_0, a_1, \dots, a_{N-1}\} \in S_N
\end{aligned}
\end{equation}

Discretized solution equation in 3D:
\begin{equation}
\begin{aligned}
& \text{minimize} \left\| p_{a_1}-O\right\| +\sum_{i=2}^{MN}\left\| p_{a_i}-p_{a_{i-1}}\right\| \\
& \text{subject to:} p_{a_i}=(x_{a_i},y_{a_i},z_{a_i}), O=(0,0,0)\\
&x_{a_h} \sin \frac{\pi i}{N} \cos \frac{2\pi k}{M} + y_{a_h} \sin \frac{\pi i}{N} \sin \frac{2\pi k}{M} + z_{a_h} \cos \frac{\pi i}{N} - 1 = 0 \\
& \forall i \in \{1, 2, \dots, N\}, \forall k \in \{1, 2, \dots, M\}, \exists h \in \{1, 2, \dots, MN\}
\end{aligned}
\end{equation}

Zalgaller \cite{Zalgaller2003} presented a 3D construction. Note that although the previous paper \cite{Zalgaller2003} mentioned the optimized length of constructed path 10.605. Since the radius was excluded in the calculation, total path length should be 11.605, as shown in Table 3. We also reproduced this result as shown in Figure 6 (Accurate optimized length result is 10.605427999164). Sections 4 and 6 in this paper used similar method, a segmented approach to summation and optimization.

\begin{figure}
    \centering
    \includegraphics[width=0.5\linewidth]{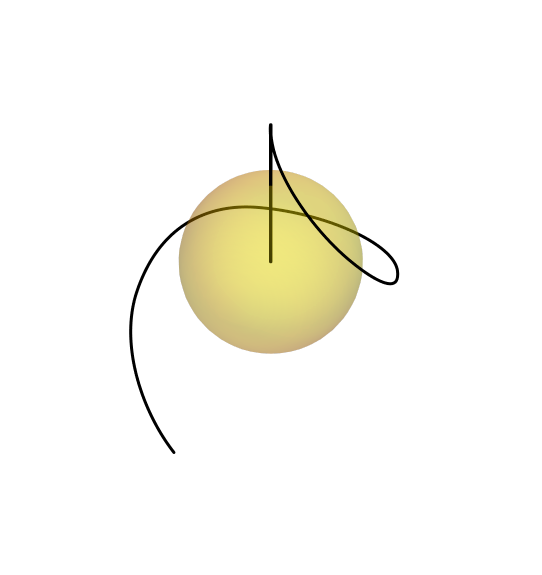}
    \caption{Reproduced non-closed curve originating from the origin whose convex hull contains the unit ball. }
\end{figure}

    \item Variation 2, path not starting from origin and not closed

Discretized solution equation in 1D:
\begin{equation}
\begin{aligned}
& \text{minimize} \sum_{i=1}^{2n} \left\|x_{a_i} - x_{a_{i-1}}\right\| \\
& \text{subject to:} \{x_{a_0}, x_{a_1}, \dots, x_{a_{2n}}\} \in S_n
\end{aligned}
\end{equation}

Discretized solution equation in 2D:
\begin{equation}
\begin{aligned}
& \text{minimize} \sum_{i=1}^{N-1}\left\| p_{a_i}-p_{a_{i-1}}\right\| \\
& \text{subject to:} p_{a_i}=(x_{a_i},y_{a_i})\\& x_{a_i} \cos \frac{2\pi i}{N} + y_{a_i} \sin \frac{2\pi i}{N} - 1 = 0, \forall i \in \{0, 1, 2, \dots, N-1\} \\
& \{a_0, a_1, \dots, a_{N-1}\} \in S_N
\end{aligned}
\end{equation}

Discretized solution equation in 3D:
\begin{equation}
\begin{aligned}
& \text{minimize} \sum_{i=2}^{MN}\left\| p_{a_i}-p_{a_{i-1}}\right\|  \\
& \text{subject to:} p_{a_i}=(x_{a_i},y_{a_i},z_{a_i})\\& x_{a_h} \sin \frac{\pi i}{N} \cos \frac{2\pi k}{M} + y_{a_h} \sin \frac{\pi i}{N} \sin \frac{2\pi k}{M} + z_{a_h} \cos \frac{\pi i}{N} - 1 = 0 \\
& \forall i \in \{1, 2, \dots, N\}, \forall k \in \{1, 2, \dots, M\}, \exists h \in \{1, 2, \dots, MN\}
\end{aligned}
\end{equation}

    \item Variation 3, path starting from origin and closed

Discretized solution equation in 1D:
\begin{equation}
\begin{aligned}
& \text{minimize} \left\| x_{a_0} - 0\right\|  + \sum_{i=1}^{2n} \left\|x_{a_i} - x_{a_{i-1}}\right\| + \left\| x_{a_{2n}} - 0\right\|\\
& \text{subject to:} \{x_{a_0}, x_{a_1}, \dots, x_{a_{2n}}\} \in S_n
\end{aligned}
\end{equation}

Discretized solution equation in 2D:
\begin{equation}
\begin{aligned}
& \text{minimize} \left\| p_{a_0}-O\right\| +\sum_{i=1}^{N-1}\left\| p_{a_i}-p_{a_{i-1}}\right\| +\left\| p_{a_N}-O\right\|\\
& \text{subject to:} p_{a_i}=(x_{a_i},y_{a_i}), O=(0,0)\\
&x_{a_i} \cos \frac{2\pi i}{N} + y_{a_i} \sin \frac{2\pi i}{N} - 1 = 0, \forall i \in \{0, 1, 2, \dots, N-1\} \\
& \{a_0, a_1, \dots, a_{N-1}\} \in S_N
\end{aligned}
\end{equation}

Discretized solution equation in 3D:
\begin{equation}
\begin{aligned}
& \text{minimize} \left\| p_{a_1}-O\right\| +\sum_{i=2}^{MN}\left\| p_{a_i}-p_{a_{i-1}}\right\| +\left\| p_{a_{MN}}-O\right\| \\
& \text{subject to:} p_{a_i}=(x_{a_i},y_{a_i},z_{a_i}), O=(0,0,0)\\&x_{a_h} \sin \frac{\pi i}{N} \cos \frac{2\pi k}{M} + y_{a_h} \sin \frac{\pi i}{N} \sin \frac{2\pi k}{M} + z_{a_h} \cos \frac{\pi i}{N} - 1 = 0 \\
& \forall i \in \{1, 2, \dots, N\}, \forall k \in \{1, 2, \dots, M\}, \exists h \in \{1, 2, \dots, MN\}
\end{aligned}
\end{equation}

    \item Variation 4, path not starting from origin and closed

Discretized solution equation in 1D:
\begin{equation}
\begin{aligned}
& \text{minimize} \left\| x_{a_0} -  x_{a_{2n}}\right\|  + \sum_{i=1}^{2n} \left\|x_{a_i} - x_{a_{i-1}}\right\|\\
& \text{subject to:} \{x_{a_0}, x_{a_1}, \dots, x_{a_{2n}}\} \in S_n
\end{aligned}
\end{equation}

Discretized solution equation in 2D:
\begin{equation}
\begin{aligned}
& \text{minimize} \left\| p_{a_0}-p_{a_{N-1}}\right\| +\sum_{i=1}^{N-1}\left\| p_{a_i}-p_{a_{i-1}}\right\| \\
& \text{subject to:} p_{a_i}=(x_{a_i},y_{a_i})\\
&x_{a_i} \cos \frac{2\pi i}{N} + y_{a_i} \sin \frac{2\pi i}{N} - 1 = 0, \forall i \in \{0, 1, 2, \dots, N-1\} \\
& \{a_0, a_1, \dots, a_{N-1}\} \in S_N
\end{aligned}
\end{equation}

Discretized solution equation in 3D:
\begin{equation}
\begin{aligned}
& \text{minimize} \left\| p_{a_1}- p_{a_{MN}}\right\| +\sum_{i=2}^{MN}\left\| p_{a_i}-p_{a_{i-1}}\right\| \\
& \text{subject to:} p_{a_i}=(x_{a_i},y_{a_i},z_{a_i})\\& x_{a_h} \sin \frac{\pi i}{N} \cos \frac{2\pi k}{M} + y_{a_h} \sin \frac{\pi i}{N} \sin \frac{2\pi k}{M} + z_{a_h} \cos \frac{\pi i}{N} - 1 = 0 \\
& \forall i \in \{1, 2, \dots, N\}, \forall k \in \{1, 2, \dots, M\}, \exists h \in \{1, 2, \dots, MN\}
\end{aligned}
\end{equation}

Zalgaller presented a 3D construction with a path length of $4\pi$ in the paper\cite{Zalgaller1996}, and Ghomi \cite{Ghomi2021} proved, as shown in Table 3.
    
\end{itemize}

Similar methods can be used in even higher dimensions. We just need to adapt the definition and calculation of distance, as well as the constraints, to a high-dimensional setting.

\section{Appendix-Formalized proofs in Lean}

The appendix provides formalized proofs of Theorems 1-7 in Lean 4 code.

The following is the Lean 4 code for Theorem 1 (Equivalence of formulation):
\begin{lstlisting}[style=LeanCode]
import Mathlib

open Set

namespace EscapePath

/-! ## Pure set-theoretic core -/

def trace {ι X : Type*} (f : ι → X) : Set X :=
  Set.range f

def actualPath {ι X Y : Type*} (e : X ≃ Y) (f : ι → X) : ι → Y :=
  fun t => e (f t)

def transformedBoundary {X Y : Type*} (e : X ≃ Y) (B : Set Y) : Set X :=
  e.symm '' B

theorem mem_transformedBoundary_iff
    {X Y : Type*} (e : X ≃ Y) (B : Set Y) (x : X) :
    x ∈ transformedBoundary e B ↔ e x ∈ B := by
  constructor
  · rintro ⟨y, hy, hxy⟩
    have h : e x = y := by
      calc
        e x = e (e.symm y) := congrArg e hxy.symm
        _ = y := e.apply_symm_apply y
    rwa [h]
  · intro hx
    exact ⟨e x, hx, e.symm_apply_apply x⟩

theorem trace_intersection_equiv
    {X Y ι : Type*} (e : X ≃ Y) (f : ι → X) (B : Set Y) :
    (trace (actualPath e f) ∩ B).Nonempty ↔
      (trace f ∩ transformedBoundary e B).Nonempty := by
  constructor
  · rintro ⟨_, ⟨t, rfl⟩, htB⟩
    refine ⟨f t, ⟨t, rfl⟩, ?_⟩
    exact (mem_transformedBoundary_iff e B (f t)).2 htB
  · rintro ⟨_, ⟨t, rfl⟩, htB⟩
    refine ⟨e (f t), ⟨t, rfl⟩, ?_⟩
    exact (mem_transformedBoundary_iff e B (f t)).1 htB

/-! ## Rigid motions in a real normed vector space -/

section RigidMotion

variable {E : Type*} [NormedAddCommGroup E] [NormedSpace ℝ E]

def rigidEquiv (s : E) (Q : E ≃ₗᵢ[ℝ] E) : E ≃ E where
  toFun x := s + Q x
  invFun y := Q.symm (y - s)
  left_inv x := by
    change Q.symm ((s + Q x) - s) = x
    rw [show (s + Q x) - s = Q x by abel]
    exact Q.symm_apply_apply x
  right_inv y := by
    change s + Q (Q.symm (y - s)) = y
    rw [Q.apply_symm_apply]
    abel

@[simp] theorem rigidEquiv_apply
    (s : E) (Q : E ≃ₗᵢ[ℝ] E) (x : E) :
    rigidEquiv s Q x = s + Q x := rfl

@[simp] theorem rigidEquiv_symm_apply
    (s : E) (Q : E ≃ₗᵢ[ℝ] E) (y : E) :
    (rigidEquiv s Q).symm y = Q.symm (y - s) := rfl

def physicalPath {ι : Type*}
    (s : E) (Q : E ≃ₗᵢ[ℝ] E) (f : ι → E) : ι → E :=
  actualPath (rigidEquiv s Q) f

def pullbackBoundary
    (s : E) (Q : E ≃ₗᵢ[ℝ] E) (B : Set E) : Set E :=
  transformedBoundary (rigidEquiv s Q) B

theorem rigid_motion_equivalence
    {ι : Type*} (s : E) (Q : E ≃ₗᵢ[ℝ] E)
    (f : ι → E) (B : Set E) :
    (trace (physicalPath s Q f) ∩ B).Nonempty ↔
      (trace f ∩ pullbackBoundary s Q B).Nonempty := by
  simpa [physicalPath, pullbackBoundary] using
    (trace_intersection_equiv (rigidEquiv s Q) f B)

/-! A concrete model of the plane: `ℂ ≅ ℝ²`. -/

noncomputable def planeRotation (α : ℝ) : ℂ ≃ₗᵢ[ℝ] ℂ := by
  letI : Fact (Module.finrank ℝ ℂ = 2) :=
    Complex.finrank_real_complex_fact
  exact Complex.orientation.rotation (α : Real.Angle)

/-! Universal feasibility over starting points and orientations. -/

def physicalFeasible
    {A ι : Type*} (B R : Set E) (Θ : Set A)
    (Q : A → E ≃ₗᵢ[ℝ] E) (f : ι → E) : Prop :=
  ∀ s ∈ R, ∀ α ∈ Θ,
    (trace (physicalPath s (Q α) f) ∩ B).Nonempty

def pullbackFeasible
    {A ι : Type*} (B R : Set E) (Θ : Set A)
    (Q : A → E ≃ₗᵢ[ℝ] E) (f : ι → E) : Prop :=
  ∀ s ∈ R, ∀ α ∈ Θ,
    (trace f ∩ pullbackBoundary s (Q α) B).Nonempty

theorem feasible_equivalence
    {A ι : Type*} (B R : Set E) (Θ : Set A)
    (Q : A → E ≃ₗᵢ[ℝ] E) (f : ι → E) :
    physicalFeasible B R Θ Q f ↔ pullbackFeasible B R Θ Q f := by
  constructor
  · intro h s hs α hα
    exact (rigid_motion_equivalence s (Q α) f B).1 (h s hs α hα)
  · intro h s hs α hα
    exact (rigid_motion_equivalence s (Q α) f B).2 (h s hs α hα)

end RigidMotion

/-! ## Distance-zero criterion once a minimum is attained -/

section Distance

variable {M ι : Type*} [PseudoMetricSpace M]

theorem attained_curve_distance_zero_iff_intersects
    (f : ι → M) (B : Set M) (t₀ : ι)
    (hBclosed : IsClosed B) (hBne : B.Nonempty)
    (hmin : ∀ t, Metric.infDist (f t₀) B ≤ Metric.infDist (f t) B) :
    Metric.infDist (f t₀) B = 0 ↔ (Set.range f ∩ B).Nonempty := by
  constructor
  · intro hzero
    have hmem : f t₀ ∈ B :=
      (hBclosed.mem_iff_infDist_zero hBne).2 hzero
    exact ⟨f t₀, ⟨t₀, rfl⟩, hmem⟩
  · rintro ⟨x, ⟨t, rfl⟩, hmem⟩
    have hle : Metric.infDist (f t₀) B ≤ 0 := by
      calc
        Metric.infDist (f t₀) B ≤ Metric.infDist (f t) B := hmin t
        _ = 0 := Metric.infDist_zero_of_mem hmem
    exact le_antisymm hle Metric.infDist_nonneg

theorem exists_infDist_minimizer
    [TopologicalSpace ι] [CompactSpace ι] [Nonempty ι]
    (f : ι → M) (B : Set M) (hf : Continuous f) :
    ∃ t₀, ∀ t, Metric.infDist (f t₀) B ≤ Metric.infDist (f t) B := by
  have hcont : Continuous (fun t => Metric.infDist (f t) B) :=
    (Metric.continuous_infDist_pt B).comp hf
  obtain ⟨t₀, _, hmin⟩ :=
    isCompact_univ.exists_isMinOn Set.univ_nonempty hcont.continuousOn
  exact ⟨t₀, fun t => hmin (Set.mem_univ t)⟩

end Distance

end EscapePath
\end{lstlisting}

The following is the Lean 4 code for Theorem 2 (Existence of minimizer):
\begin{lstlisting}[style=LeanCode]
import Mathlib

open Set

namespace EscapePath

/-!
# Direct-method existence theorem for escape paths

The file separates the proof into three logically independent pieces:

1. a general compact-sublevel minimization theorem;
2. a normalization theorem schema (for arclength reparametrization);
3. closedness of the boundary-hitting constraints.

For the geometric application, instantiate `X` with a uniform-function space of
curves, `F` with total variation (`eVariationOn`), and `normalize` with the
constant-speed/arclength reparametrization.
-/

section DirectMethod

variable {X Y : Type*} [TopologicalSpace X] [LinearOrder Y]

/-- A lower-semicontinuous objective has a global minimizer on `A` as soon as
one sublevel set based at a feasible reference point is compact. -/
theorem exists_global_minimizer_of_compact_sublevel
    (F : X → Y) (A : Set X) (x₀ : X)
    (hx₀ : x₀ ∈ A)
    (hcompact : IsCompact {x | x ∈ A ∧ F x ≤ F x₀})
    (hlsc : LowerSemicontinuousOn F {x | x ∈ A ∧ F x ≤ F x₀}) :
    ∃ x ∈ A, ∀ y ∈ A, F x ≤ F y := by
  let K : Set X := {x | x ∈ A ∧ F x ≤ F x₀}
  change IsCompact K at hcompact
  change LowerSemicontinuousOn F K at hlsc
  have hx₀K : x₀ ∈ K := ⟨hx₀, le_rfl⟩
  obtain ⟨x, hxK, hx_min⟩ :=
    hlsc.exists_isMinOn ⟨x₀, hx₀K⟩ hcompact
  refine ⟨x, hxK.1, ?_⟩
  intro y hyA
  by_cases hyK : F y ≤ F x₀
  · exact hx_min ⟨hyA, hyK⟩
  · exact hxK.2.trans (le_of_lt (lt_of_not_ge hyK))

/-- Direct method with a trace/length-preserving normalization map.

This is the exact abstract form needed for arclength reparametrization:
every feasible object can be normalized without changing its cost, while the
normalized reference sublevel is compact. -/
theorem exists_global_minimizer_via_normalization
    (F : X → Y)
    (Feasible Normalized : Set X)
    (normalize : X → X)
    (x₀ : X)
    (hx₀ : x₀ ∈ Feasible)
    (normalize_feasible : ∀ ⦃x⦄, x ∈ Feasible → normalize x ∈ Feasible)
    (normalize_normalized : ∀ ⦃x⦄, x ∈ Feasible → normalize x ∈ Normalized)
    (normalize_cost : ∀ ⦃x⦄, x ∈ Feasible → F (normalize x) = F x)
    (hcompact :
      IsCompact
        {x | x ∈ Feasible ∩ Normalized ∧ F x ≤ F (normalize x₀)})
    (hlsc :
      LowerSemicontinuousOn F
        {x | x ∈ Feasible ∩ Normalized ∧ F x ≤ F (normalize x₀)}) :
    ∃ x ∈ Feasible, ∀ y ∈ Feasible, F x ≤ F y := by
  let A : Set X := Feasible ∩ Normalized
  have hx₀A : normalize x₀ ∈ A :=
    ⟨normalize_feasible hx₀, normalize_normalized hx₀⟩
  obtain ⟨x, hxA, hx_min⟩ :=
    exists_global_minimizer_of_compact_sublevel
      F A (normalize x₀) hx₀A hcompact hlsc
  refine ⟨x, hxA.1, ?_⟩
  intro y hy
  calc
    F x ≤ F (normalize y) :=
      hx_min (normalize y) ⟨normalize_feasible hy, normalize_normalized hy⟩
    _ = F y := normalize_cost hy

end DirectMethod

section RigidMotion

variable {E : Type*} [TopologicalSpace E]

/-- Canonical-coordinate image of a boundary under the inverse rigid motion. -/
def transformedBoundary (e : E ≃ₜ E) (B : Set E) : Set E :=
  e.symm '' B

@[simp]
theorem mem_transformedBoundary_iff
    (e : E ≃ₜ E) (B : Set E) (x : E) :
    x ∈ transformedBoundary e B ↔ e x ∈ B := by
  constructor
  · rintro ⟨y, hy, rfl⟩
    simpa using hy
  · intro hx
    exact ⟨e x, hx, by simp⟩

variable {ι : Type*}

/-- The actual path in physical coordinates. -/
def actualPath (e : E ≃ₜ E) (f : ι → E) : ι → E :=
  fun t => e (f t)

/-- Hitting the physical boundary is equivalent to hitting its transformed
copy in canonical coordinates. -/
theorem actualPath_hits_iff
    (e : E ≃ₜ E) (B : Set E) (f : ι → E) :
    (∃ t, actualPath e f t ∈ B) ↔
      ∃ t, f t ∈ transformedBoundary e B := by
  simp only [actualPath, mem_transformedBoundary_iff]

end RigidMotion

section ClosedFeasibility

variable {ι E G : Type*}
variable [TopologicalSpace ι] [CompactSpace ι]
variable [MetricSpace E]

abbrev Curve := BoundedContinuousFunction ι E

/-- A curve hits a set if one of its parameter values lies in that set. -/
def Hits (f : Curve (ι := ι) (E := E)) (S : Set E) : Prop :=
  ∃ t : ι, f t ∈ S

/-- Curves hitting a fixed closed set form a closed subset of the uniform
function space. Compactness of the parameter domain is essential here. -/
theorem isClosed_setOf_hits (S : Set E) (hS : IsClosed S) :
    IsClosed {f : Curve (ι := ι) (E := E) | Hits f S} := by
  let C : Set (ι × Curve (ι := ι) (E := E)) :=
    {p | p.2 p.1 ∈ S}
  have hEval :
      Continuous (fun p : ι × Curve (ι := ι) (E := E) => p.2 p.1) := by
    fun_prop
  have hC : IsClosed C := hS.preimage hEval
  have hproj : IsClosed (Prod.snd '' C) :=
    isClosedMap_snd_of_compactSpace C hC
  have heq :
      Prod.snd '' C =
        {f : Curve (ι := ι) (E := E) | Hits f S} := by
    ext f
    constructor
    · rintro ⟨p, hp, rfl⟩
      exact ⟨p.1, by simpa [C] using hp⟩
    · rintro ⟨t, ht⟩
      refine ⟨(t, f), ?_, rfl⟩
      simpa [C] using ht
  rwa [← heq]

/-- Feasibility for an arbitrary family of transformed boundaries. -/
def Feasible
    (boundary : G → Set E)
    (f : Curve (ι := ι) (E := E)) : Prop :=
  ∀ g, Hits f (boundary g)

/-- An arbitrary intersection of closed boundary-hitting constraints is closed.
No compactness assumption on the index type `G` is needed. -/
theorem isClosed_setOf_feasible
    (boundary : G → Set E)
    (hboundary : ∀ g, IsClosed (boundary g)) :
    IsClosed
      {f : Curve (ι := ι) (E := E) | Feasible boundary f} := by
  have heq :
      {f : Curve (ι := ι) (E := E) | Feasible boundary f} =
        ⋂ g, {f : Curve (ι := ι) (E := E) | Hits f (boundary g)} := by
    ext f
    simp [Feasible]
  rw [heq]
  exact isClosed_iInter fun g => isClosed_setOf_hits (boundary g) (hboundary g)

/-- A compact boundary remains compact, hence closed, after a homeomorphism. -/
theorem transformedBoundary_isClosed
    (B : Set E) (hB : IsCompact B) (e : E ≃ₜ E) :
    IsClosed (transformedBoundary e B) := by
  exact (hB.image e.symm.continuous).isClosed

/-- Feasibility for a family of rigid motions is closed in the uniform topology. -/
theorem rigid_feasible_isClosed
    (B : Set E) (hB : IsCompact B) (rigid : G → E ≃ₜ E) :
    IsClosed
      {f : Curve (ι := ι) (E := E) |
        ∀ g, Hits f (transformedBoundary (rigid g) B)} := by
  exact isClosed_setOf_feasible
    (fun g => transformedBoundary (rigid g) B)
    (fun g => transformedBoundary_isClosed B hB (rigid g))

end ClosedFeasibility

section VariationLength

variable {ι E : Type*}
variable [LinearOrder ι] [TopologicalSpace ι]
variable [PseudoMetricSpace E]

/-- Extended-real total variation, the metric definition of curve length. -/
noncomputable def curveLength
    (f : BoundedContinuousFunction ι E) : ENNReal :=
  eVariationOn f Set.univ

end VariationLength

end EscapePath
\end{lstlisting}

The following is the Lean 4 code for Lemma 3, Lemma 4, and Theorem 5 (Discretization yields TSPN):
\begin{lstlisting}[style=LeanCode]
import Mathlib

open Set
open scoped ENNReal

/-!
# Exact finite reduction to a rooted open TSPN

This file formalizes the logical core of the reduction.

The analytic input from absolutely continuous Euclidean curves is isolated in
`RectifiablePath.chain_le_length`: sampling a curve at nondecreasing times gives
an ordered polygonal chain whose length is no greater than the curve length.

The Euclidean construction of a piecewise-linear interpolant is isolated in
`PolylineRealizer`: every finite TSPN tour can be realized by a feasible path
with no larger length.  Once these two standard analytic facts are supplied,
the equality of the two infima is completely formal.
-/

namespace DiscretizedTSPN

universe u v w


/-! ## 1. Rigid-motion/intersection core -/

def Intersects {I X : Type*} (f : I → X) (S : Set X) : Prop :=
  ∃ t, f t ∈ S

def actualPath {I X : Type*} (e : X ≃ X) (f : I → X) : I → X :=
  fun t => e (f t)

def transformedBoundary {X : Type*} (e : X ≃ X) (B : Set X) : Set X :=
  e.symm '' B

theorem mem_transformedBoundary_iff
    {X : Type*} (e : X ≃ X) (B : Set X) (x : X) :
    x ∈ transformedBoundary e B ↔ e x ∈ B := by
  constructor
  · rintro ⟨y, hy, rfl⟩
    simpa using hy
  · intro hx
    exact ⟨e x, hx, by simp⟩

theorem actualPath_intersects_iff
    {I X : Type*} (e : X ≃ X) (f : I → X) (B : Set X) :
    Intersects (actualPath e f) B ↔ Intersects f (transformedBoundary e B) := by
  constructor
  · rintro ⟨t, ht⟩
    exact ⟨t, (mem_transformedBoundary_iff e B (f t)).2 ht⟩
  · rintro ⟨t, ht⟩
    exact ⟨t, (mem_transformedBoundary_iff e B (f t)).1 ht⟩

/-! ## 2. Distance zero versus membership

For a nonempty closed set, Mathlib's point-to-set `infDist` vanishes exactly on
that set. Compact transformed boundaries satisfy these hypotheses.
-/

theorem mem_iff_infDist_zero
    {X : Type*} [PseudoMetricSpace X]
    {S : Set X} (hSclosed : IsClosed S) (hSne : S.Nonempty) (x : X) :
    x ∈ S ↔ Metric.infDist x S = 0 := by
  exact hSclosed.mem_iff_infDist_zero hSne

/-! ## 3. Abstract optimization values and monotonicity -/

def FeasibleOn
    {Γ Path : Type*} (A : Set Γ) (hit : Path → Γ → Prop) (p : Path) : Prop :=
  ∀ g ∈ A, hit p g

/-- The infimum cost over paths feasible on `A`; the value is `∞` if there is
no feasible path. -/
noncomputable def OptValue
    {Γ Path : Type*} (A : Set Γ) (hit : Path → Γ → Prop)
    (cost : Path → ℝ≥0∞) : ℝ≥0∞ :=
  ⨅ p : {p : Path // FeasibleOn A hit p}, cost p.1

theorem optValue_antitone
    {Γ Path : Type*} {A C : Set Γ} (hAC : A ⊆ C)
    (hit : Path → Γ → Prop) (cost : Path → ℝ≥0∞) :
    OptValue A hit cost ≤ OptValue C hit cost := by
  apply le_iInf
  intro pC
  exact iInf_le_of_le
    (⟨pC.1, fun g hg => pC.2 g (hAC hg)⟩ :
      {p : Path // FeasibleOn A hit p})
    le_rfl

theorem nested_optValue_monotone
    {Γ Path : Type*} (G : ℕ → Set Γ)
    (h_nested : ∀ m, G m ⊆ G (m + 1))
    (hit : Path → Γ → Prop) (cost : Path → ℝ≥0∞) (m : ℕ) :
    OptValue (G m) hit cost ≤ OptValue (G (m + 1)) hit cost := by
  exact optValue_antitone (h_nested m) hit cost

theorem discretized_value_le_full_value
    {Γ Path : Type*} (A G : Set Γ) (hAG : A ⊆ G)
    (hit : Path → Γ → Prop) (cost : Path → ℝ≥0∞) :
    OptValue A hit cost ≤ OptValue G hit cost := by
  exact optValue_antitone hAG hit cost

/-! ## 4. Rooted open TSPN

The route begins at `origin`, visits one selected point from every neighborhood,
and has no forced return edge.
-/

variable {X : Type u} [PseudoMetricSpace X]

/-- Length of the ordered chain `x₀ → q₁ → ... → qₙ`. -/
def chainCost (x₀ : X) : List X → ℝ≥0∞
  | [] => 0
  | q :: qs => ENNReal.ofReal (dist x₀ q) + chainCost q qs

@[simp] theorem chainCost_nil (x₀ : X) : chainCost x₀ [] = 0 := rfl

@[simp] theorem chainCost_cons (x₀ q : X) (qs : List X) :
    chainCost x₀ (q :: qs) =
      ENNReal.ofReal (dist x₀ q) + chainCost q qs := rfl

/-- A rectifiable-path interface containing exactly the chain inequality used by
polygonal shortening.  Absolutely continuous Euclidean curves with integral
length instantiate this structure. -/
structure RectifiablePath (origin : X) where
  toFun : ℝ → X
  start : toFun 0 = origin
  length : ℝ≥0∞
  chain_le_length :
    ∀ {H : ℕ} (time : Fin H → ℝ),
      (∀ i, time i ∈ Set.Icc (0 : ℝ) 1) →
      ∀ order : List (Fin H),
        order.Nodup →
        (∀ i, i ∈ order) →
        order.Pairwise (fun i j => time i ≤ time j) →
        chainCost origin (order.map (fun i => toFun (time i))) ≤ length

instance (origin : X) : CoeFun (RectifiablePath origin) (fun _ => ℝ → X) :=
  ⟨RectifiablePath.toFun⟩

/-- A rooted open tour with one chosen representative in each neighborhood. -/
structure Tour {H : ℕ} (S : Fin H → Set X) where
  order : List (Fin H)
  nodup : order.Nodup
  complete : ∀ i, i ∈ order
  point : Fin H → X
  point_mem : ∀ i, point i ∈ S i

def tourCost
    {H : ℕ} (origin : X) {S : Fin H → Set X} (τ : Tour S) : ℝ≥0∞ :=
  chainCost origin (τ.order.map τ.point)

/-- A path is feasible when it hits every neighborhood at some parameter time
in `[0,1]`. -/
def FiniteFeasible
    {H : ℕ} (S : Fin H → Set X) {origin : X}
    (p : RectifiablePath origin) : Prop :=
  ∀ i, ∃ t ∈ Set.Icc (0 : ℝ) 1, p t ∈ S i

/-- Sorting one selected hit time per neighborhood produces a valid tour whose
cost is bounded by the path length.  Lexicographic tie-breaking by the finite
index handles simultaneous hits rigorously. -/
theorem exists_tour_cost_le_of_feasible
    {H : ℕ} {origin : X} {S : Fin H → Set X}
    (p : RectifiablePath origin) (hp : FiniteFeasible S p) :
    ∃ τ : Tour S, tourCost origin τ ≤ p.length := by
  classical
  let hitTime : Fin H → ℝ := fun i => Classical.choose (hp i)
  have hitTime_mem : ∀ i, hitTime i ∈ Set.Icc (0 : ℝ) 1 := by
    intro i
    exact (Classical.choose_spec (hp i)).1
  have hitPoint_mem : ∀ i, p (hitTime i) ∈ S i := by
    intro i
    exact (Classical.choose_spec (hp i)).2
  let key : Fin H → ℝ ×ₗ Fin H :=
    fun i => toLex (hitTime i, i)
  have key_injective : Function.Injective key := by
    intro i j hij
    have hsecond := congrArg (fun z : ℝ ×ₗ Fin H => (ofLex z).2) hij
    simpa [key] using hsecond
  /- We sort with the pullback of lexicographic order along `key`, rather than
  replacing the existing `LinearOrder (Fin H)`. This avoids ambiguity between
  the native order on `Fin H` and the temporary lifted order. -/
  let rel : Fin H → Fin H → Prop := fun i j => key i ≤ key j
  letI : DecidableRel rel := by
    intro i j
    exact inferInstanceAs (Decidable (key i ≤ key j))
  letI : IsTrans (Fin H) rel :=
    ⟨by
      intro i j k hij hjk
      have hij' : key i ≤ key j := by
        simpa [rel] using hij
      have hjk' : key j ≤ key k := by
        simpa [rel] using hjk
      simpa [rel] using hij'.trans hjk'⟩
  letI : Std.Antisymm rel :=
    ⟨by
      intro i j hij hji
      have hij' : key i ≤ key j := by
        simpa [rel] using hij
      have hji' : key j ≤ key i := by
        simpa [rel] using hji
      exact key_injective (le_antisymm hij' hji')⟩
  letI : Std.Total rel :=
    ⟨by
      intro i j
      rcases le_total (key i) (key j) with hij | hji
      · exact Or.inl (by simpa [rel] using hij)
      · exact Or.inr (by simpa [rel] using hji)⟩
  let order : List (Fin H) := Finset.univ.sort rel
  have order_nodup : order.Nodup := by
    dsimp [order]
    exact Finset.sort_nodup _ _
  have order_complete : ∀ i, i ∈ order := by
    intro i
    simp [order]
  have order_pairwise_key : order.Pairwise rel := by
    dsimp [order]
    exact Finset.pairwise_sort _ _
  have order_pairwise_time :
      order.Pairwise (fun i j : Fin H => hitTime i ≤ hitTime j) := by
    refine order_pairwise_key.imp ?_
    intro i j hij
    have hkey : key i ≤ key j := by
      simpa [rel] using hij
    simpa [key] using Prod.Lex.monotone_fst (key i) (key j) hkey
  let τ : Tour S :=
    { order := order
      nodup := order_nodup
      complete := order_complete
      point := fun i => p (hitTime i)
      point_mem := hitPoint_mem }
  refine ⟨τ, ?_⟩
  simpa [tourCost, τ] using
    p.chain_le_length hitTime hitTime_mem order order_nodup
      order_complete order_pairwise_time

/-- The discretized path optimum. -/
noncomputable def pathValue
    {H : ℕ} (origin : X) (S : Fin H → Set X) : ℝ≥0∞ :=
  ⨅ p : {p : RectifiablePath origin // FiniteFeasible S p}, p.1.length

/-- The rooted open TSPN optimum. -/
noncomputable def tspnValue
    {H : ℕ} (origin : X) (S : Fin H → Set X) : ℝ≥0∞ :=
  ⨅ τ : Tour S, tourCost origin τ

/-- The reverse analytic input: every selected ordered neighborhood tour is
realized by a feasible path no longer than its polygonal-chain objective.
In Euclidean space, the ordinary piecewise-linear interpolant satisfies this. -/
structure PolylineRealizer
    {H : ℕ} (origin : X) (S : Fin H → Set X) where
  realize : Tour S → RectifiablePath origin
  feasible : ∀ τ, FiniteFeasible S (realize τ)
  length_le : ∀ τ, (realize τ).length ≤ tourCost origin τ

/-- Every feasible path admits a no-longer TSPN tour. -/
theorem tspnValue_le_pathValue
    {H : ℕ} (origin : X) (S : Fin H → Set X) :
    tspnValue origin S ≤ pathValue origin S := by
  apply le_iInf
  intro p
  rcases exists_tour_cost_le_of_feasible p.1 p.2 with ⟨τ, hτ⟩
  calc
    tspnValue origin S ≤ tourCost origin τ :=
      iInf_le (fun σ : Tour S => tourCost origin σ) τ
    _ ≤ p.1.length := hτ

/-- A polyline realizer gives the reverse inequality. -/
theorem pathValue_le_tspnValue
    {H : ℕ} (origin : X) (S : Fin H → Set X)
    (R : PolylineRealizer origin S) :
    pathValue origin S ≤ tspnValue origin S := by
  apply le_iInf
  intro τ
  calc
    pathValue origin S ≤ (R.realize τ).length :=
      iInf_le
        (fun p : {p : RectifiablePath origin // FiniteFeasible S p} => p.1.length)
        (⟨R.realize τ, R.feasible τ⟩ :
          {p : RectifiablePath origin // FiniteFeasible S p})
    _ ≤ tourCost origin τ := R.length_le τ

/-- Exact equality of the discretized problem and rooted open TSPN. -/
theorem discretized_problem_eq_tspn
    {H : ℕ} (origin : X) (S : Fin H → Set X)
    (R : PolylineRealizer origin S) :
    pathValue origin S = tspnValue origin S := by
  apply le_antisymm
  · exact pathValue_le_tspnValue origin S R
  · exact tspnValue_le_pathValue origin S

end DiscretizedTSPN
\end{lstlisting}

The following is the Lean 4 code for Theorem 6 and 7 (Certificate and convergence):
\begin{lstlisting}[style=LeanCode]
import Mathlib

open Set
open scoped ENNReal

/-!
# Exact certificates and convergence of nested TSPN discretizations

This file formalizes the logical certificate and convergence core independently
of any particular MIP solver.  A solver report is mathematically useful only
after it has been converted into a checked `GlobalCertificate`.

For convergence, the parameter type `Γ` should be the compact parameter space
`G = R × Θ` itself (that is, a subtype if necessary).  Thus the full constraint
set is `Set.univ : Set Γ`.

The compactness hypothesis is intended to be discharged after constant-speed
(or another equi-Lipschitz) normalization of rectifiable curves.  A bound on
length alone is not enough for compactness of arbitrary parameterizations.
-/

namespace CertifiedTSPN

universe uΓ uP uS

/-! ## 1. Feasibility and optimization values -/

/-- `p` satisfies every constraint indexed by `A`. -/
def FeasibleOn
    {Γ : Type uΓ} {Path : Type uP}
    (A : Set Γ) (hit : Path → Γ → Prop) (p : Path) : Prop :=
  ∀ g ∈ A, hit p g

/-- Infimum of the path cost under all constraints indexed by `A`. -/
noncomputable def OptValue
    {Γ : Type uΓ} {Path : Type uP}
    (A : Set Γ) (hit : Path → Γ → Prop)
    (cost : Path → ℝ≥0∞) : ℝ≥0∞ :=
  ⨅ p : {p : Path // FeasibleOn A hit p}, cost p.1

/-- Adding constraints can only increase the optimum. -/
theorem optValue_antitone
    {Γ : Type uΓ} {Path : Type uP}
    {A C : Set Γ} (hAC : A ⊆ C)
    (hit : Path → Γ → Prop) (cost : Path → ℝ≥0∞) :
    OptValue A hit cost ≤ OptValue C hit cost := by
  apply le_iInf
  intro pC
  exact iInf_le_of_le
    (⟨pC.1, fun g hg => pC.2 g (hAC hg)⟩ :
      {p : Path // FeasibleOn A hit p})
    le_rfl

/-- An attained global minimizer realizes the `iInf` definition of `OptValue`. -/
theorem cost_eq_optValue_of_global_minimizer
    {Γ : Type uΓ} {Path : Type uP}
    (A : Set Γ) (hit : Path → Γ → Prop) (cost : Path → ℝ≥0∞)
    (p : Path) (hp : FeasibleOn A hit p)
    (hmin : ∀ q, FeasibleOn A hit q → cost p ≤ cost q) :
    cost p = OptValue A hit cost := by
  apply le_antisymm
  · apply le_iInf
    intro q
    exact hmin q.1 q.2
  · exact iInf_le
      (fun q : {q : Path // FeasibleOn A hit q} => cost q.1)
      (⟨p, hp⟩ : {q : Path // FeasibleOn A hit q})

/-- Values of nested discretizations are monotone nondecreasing. -/
theorem nested_optValue_monotone
    {Γ : Type uΓ} {Path : Type uP}
    (Gm : ℕ → Set Γ)
    (hNested : ∀ m, Gm m ⊆ Gm (m + 1))
    (hit : Path → Γ → Prop) (cost : Path → ℝ≥0∞) :
    Monotone (fun m => OptValue (Gm m) hit cost) := by
  apply monotone_nat_of_le_succ
  intro m
  exact optValue_antitone (hNested m) hit cost

/-- Every discretized optimum is a lower bound on the full optimum. -/
theorem discretized_value_le_full_value
    {Γ : Type uΓ} {Path : Type uP}
    (A : Set Γ)
    (hit : Path → Γ → Prop) (cost : Path → ℝ≥0∞) :
    OptValue A hit cost ≤ OptValue (Set.univ : Set Γ) hit cost := by
  exact optValue_antitone (Set.subset_univ A) hit cost

/-! ## 2. Exact independently checkable global certificates -/

/-- The infimum of an arbitrary constrained finite optimization model. -/
noncomputable def InfValue
    {Sol : Type uS} (feasible : Sol → Prop)
    (objective : Sol → ℝ≥0∞) : ℝ≥0∞ :=
  ⨅ x : {x : Sol // feasible x}, objective x.1

/--
An exact global certificate consists of

* a feasible incumbent;
* a lower bound valid for every feasible solution; and
* exact equality of incumbent objective and lower bound.

This is the precise mathematical content of a verified zero-gap certificate.
-/
structure GlobalCertificate
    {Sol : Type uS} (feasible : Sol → Prop)
    (objective : Sol → ℝ≥0∞) where
  incumbent : Sol
  lowerBound : ℝ≥0∞
  incumbent_feasible : feasible incumbent
  lowerBound_valid : ∀ x, feasible x → lowerBound ≤ objective x
  zeroGap : objective incumbent = lowerBound

namespace GlobalCertificate

variable {Sol : Type uS} {feasible : Sol → Prop}
variable {objective : Sol → ℝ≥0∞}

/-- A checked zero-gap certificate proves global optimality. -/
theorem optimal
    (c : GlobalCertificate feasible objective) :
    ∀ x, feasible x → objective c.incumbent ≤ objective x := by
  intro x hx
  rw [c.zeroGap]
  exact c.lowerBound_valid x hx

/-- The incumbent objective is the least feasible objective value. -/
theorem isLeast_objective_image
    (c : GlobalCertificate feasible objective) :
    IsLeast (objective '' {x | feasible x}) (objective c.incumbent) := by
  constructor
  · exact ⟨c.incumbent, c.incumbent_feasible, rfl⟩
  · rintro y ⟨x, hx, rfl⟩
    exact c.optimal x hx

/-- The certified incumbent equals the exact infimum of the finite model. -/
theorem objective_eq_infValue
    (c : GlobalCertificate feasible objective) :
    objective c.incumbent = InfValue feasible objective := by
  apply le_antisymm
  · apply le_iInf
    intro x
    exact c.optimal x.1 x.2
  · exact iInf_le
      (fun x : {x : Sol // feasible x} => objective x.1)
      (⟨c.incumbent, c.incumbent_feasible⟩ :
        {x : Sol // feasible x})

end GlobalCertificate

/--
If the finite model has already been proved exactly equivalent to a TSPN
(or to the discretized path problem), a checked global certificate proves the
exact discretized value.
-/
theorem certificate_gives_exact_discretized_value
    {Γ : Type uΓ} {Path : Type uP} {Sol : Type uS}
    (A : Set Γ) (hit : Path → Γ → Prop) (cost : Path → ℝ≥0∞)
    (feasible : Sol → Prop) (objective : Sol → ℝ≥0∞)
    (c : GlobalCertificate feasible objective)
    (hExactModel : InfValue feasible objective = OptValue A hit cost) :
    objective c.incumbent = OptValue A hit cost := by
  calc
    objective c.incumbent = InfValue feasible objective :=
      c.objective_eq_infValue
    _ = OptValue A hit cost := hExactModel

/-! ## 3. Closed feasibility and extension from a dense parameter set -/

section Topology

variable {Γ : Type uΓ} {Path : Type uP}

/-- If each individual constraint is closed in path space, feasibility on any
index set is closed. -/
theorem isClosed_feasibleOn
    [TopologicalSpace Path]
    (A : Set Γ) (hit : Path → Γ → Prop)
    (hClosed : ∀ g, IsClosed {p : Path | hit p g}) :
    IsClosed {p : Path | FeasibleOn A hit p} := by
  have hEq :
      {p : Path | FeasibleOn A hit p} =
        ⋂ g : Γ, ⋂ (_hg : g ∈ A), {p : Path | hit p g} := by
    ext p
    simp [FeasibleOn]
  rw [hEq]
  exact isClosed_iInter fun g =>
    isClosed_iInter fun _hg => hClosed g

/-- A path satisfying a dense family of parameter constraints satisfies every
constraint, provided its hit set is closed in parameter space. -/
theorem feasibleOn_univ_of_dense
    [TopologicalSpace Γ]
    (A : Set Γ) (hit : Path → Γ → Prop) (p : Path)
    (hDense : Dense A)
    (hClosedParam : IsClosed {g : Γ | hit p g})
    (hp : FeasibleOn A hit p) :
    FeasibleOn (Set.univ : Set Γ) hit p := by
  intro g _hg
  let H : Set Γ := {q | hit p q}
  have hAH : A ⊆ H := by
    intro q hq
    exact hp q hq
  have hClosure : closure A ⊆ H := by
    exact (hClosedParam.closure_subset_iff).2 hAH
  exact hClosure (hDense g)

end Topology

/-! ## 4. Value convergence by compact nested intersections -/

section Convergence

variable {Γ : Type uΓ} {Path : Type uP}
variable [TopologicalSpace Γ] [TopologicalSpace Path] [T2Space Path]

/--
Rigorous convergence theorem for nested dense discretizations.

`Path` is the normalized admissible path space.  The key analytic hypothesis is
compactness of the common sublevel set at the supremum of the finite optima.
For rectifiable curves, this is obtained from a constant-speed/equi-Lipschitz
normalization and Arzela--Ascoli, together with lower semicontinuity of length.
-/
theorem nested_dense_optValue_iSup_eq_full
    (Gm : ℕ → Set Γ)
    (hit : Path → Γ → Prop)
    (cost : Path → ℝ≥0∞)
    (hNested : ∀ m, Gm m ⊆ Gm (m + 1))
    (hDense : Dense (⋃ m, Gm m))
    (hClosedPath : ∀ g, IsClosed {p : Path | hit p g})
    (hClosedParam : ∀ p, IsClosed {g : Γ | hit p g})
    (hAttained :
      ∀ m, ∃ p,
        FeasibleOn (Gm m) hit p ∧
        cost p = OptValue (Gm m) hit cost)
    (hCompactSublevel :
      IsCompact
        {p : Path |
          cost p ≤ ⨆ m, OptValue (Gm m) hit cost}) :
    (⨆ m, OptValue (Gm m) hit cost) =
      OptValue (Set.univ : Set Γ) hit cost := by
  let Lsup : ℝ≥0∞ := ⨆ m, OptValue (Gm m) hit cost
  let F : ℕ → Set Path :=
    fun m => {p | FeasibleOn (Gm m) hit p}
  let C : Set Path := {p | cost p ≤ Lsup}
  let K : ℕ → Set Path := fun m => F m ∩ C
  have hFClosed : ∀ m, IsClosed (F m) := by
    intro m
    dsimp [F]
    exact isClosed_feasibleOn (Gm m) hit hClosedPath
  have hCCompact : IsCompact C := by
    simpa [C, Lsup] using hCompactSublevel
  have hCClosed : IsClosed C := hCCompact.isClosed
  have hKClosed : ∀ m, IsClosed (K m) := by
    intro m
    dsimp [K]
    exact (hFClosed m).inter hCClosed
  have hKCompact0 : IsCompact (K 0) := by
    dsimp [K]
    exact hCCompact.inter_left (hFClosed 0)
  have hKNonempty : ∀ m, (K m).Nonempty := by
    intro m
    obtain ⟨p, hpFeas, hpCost⟩ := hAttained m
    refine ⟨p, ?_⟩
    change FeasibleOn (Gm m) hit p ∧ cost p ≤ Lsup
    constructor
    · exact hpFeas
    · rw [hpCost]
      dsimp [Lsup]
      exact le_iSup (fun n => OptValue (Gm n) hit cost) m
  have hKDecreasing : ∀ m, K (m + 1) ⊆ K m := by
    intro m p hp
    change FeasibleOn (Gm (m + 1)) hit p ∧ cost p ≤ Lsup at hp
    change FeasibleOn (Gm m) hit p ∧ cost p ≤ Lsup
    exact ⟨fun g hg => hp.1 g (hNested m hg), hp.2⟩
  obtain ⟨p, hpAll⟩ :=
    IsCompact.nonempty_iInter_of_sequence_nonempty_isCompact_isClosed
      K hKDecreasing hKNonempty hKCompact0 hKClosed
  have hpK : ∀ m, p ∈ K m := by
    intro m
    exact Set.mem_iInter.mp hpAll m
  have hpUnion : FeasibleOn (⋃ m, Gm m) hit p := by
    intro g hg
    rcases Set.mem_iUnion.mp hg with ⟨m, hgm⟩
    have hpm := hpK m
    change FeasibleOn (Gm m) hit p ∧ cost p ≤ Lsup at hpm
    exact hpm.1 g hgm
  have hpFull : FeasibleOn (Set.univ : Set Γ) hit p :=
    feasibleOn_univ_of_dense
      (⋃ m, Gm m) hit p hDense (hClosedParam p) hpUnion
  have hpCostLe : cost p ≤ Lsup := by
    have hp0 := hpK 0
    change FeasibleOn (Gm 0) hit p ∧ cost p ≤ Lsup at hp0
    exact hp0.2
  have hSupLeFull :
      Lsup ≤ OptValue (Set.univ : Set Γ) hit cost := by
    dsimp [Lsup]
    apply iSup_le
    intro m
    exact discretized_value_le_full_value (Gm m) hit cost
  have hFullLeCost :
      OptValue (Set.univ : Set Γ) hit cost ≤ cost p := by
    exact iInf_le
      (fun q : {q : Path // FeasibleOn (Set.univ : Set Γ) hit q} =>
        cost q.1)
      (⟨p, hpFull⟩ :
        {q : Path // FeasibleOn (Set.univ : Set Γ) hit q})
  change Lsup = OptValue (Set.univ : Set Γ) hit cost
  exact le_antisymm hSupLeFull (hFullLeCost.trans hpCostLe)

end Convergence

/-! ## 5. Correct two-sided certificates -/

/-- A full-feasible path supplies a genuine upper bound. -/
theorem full_feasible_path_is_upper_bound
    {Γ : Type uΓ} {Path : Type uP}
    (hit : Path → Γ → Prop) (cost : Path → ℝ≥0∞)
    (p : Path) (hp : FeasibleOn (Set.univ : Set Γ) hit p) :
    OptValue (Set.univ : Set Γ) hit cost ≤ cost p := by
  exact iInf_le
    (fun q : {q : Path // FeasibleOn (Set.univ : Set Γ) hit q} => cost q.1)
    (⟨p, hp⟩ : {q : Path // FeasibleOn (Set.univ : Set Γ) hit q})

/-- A finite exact solve and a separately verified full-feasible path give a
rigorous interval containing the continuous optimum. -/
theorem certified_two_sided_interval
    {Γ : Type uΓ} {Path : Type uP}
    (A : Set Γ) (hit : Path → Γ → Prop) (cost : Path → ℝ≥0∞)
    (pUpper : Path)
    (hpUpper : FeasibleOn (Set.univ : Set Γ) hit pUpper) :
    OptValue A hit cost ≤
        OptValue (Set.univ : Set Γ) hit cost ∧
      OptValue (Set.univ : Set Γ) hit cost ≤ cost pUpper := by
  constructor
  · exact discretized_value_le_full_value A hit cost
  · exact full_feasible_path_is_upper_bound hit cost pUpper hpUpper

end CertifiedTSPN
\end{lstlisting}

\textbf{AI usage disclosure:} The formalized proofs in Lean 4 code was assisted by GPT-5.6 sol.

~\\
College of Engineering and Computer Science, University of Central Florida, Orlando, FL, USA

Email: \underline{zhipeng.deng@ucf.edu}

\end{document}